 \newtheorem{thm}{Theorem}[section]
  \newtheorem{cor}[thm]{Corollary}
 \newtheorem{lem}[thm]{Lemma}
 \newtheorem{prop}[thm]{Proposition}
    \theoremstyle{definition}
    \newtheorem{notn}[thm]{Remark}
  \newtheorem{example}[thm]{Example}
 \theoremstyle{remark}
\numberwithin{equation}{section}
\DeclareMathOperator{\ham}{ham}
\begin{document}

\title[Enveloping algebras of double Poisson-Ore extensions]{Enveloping algebras of double Poisson-Ore extensions}

\author{Jiafeng L\"u}
\address{Department of Mathematics, Zhejiang Normal University, Jinhua, Zhejiang 321004, China} \email{jiafenglv@zjnu.edu.cn}
\author{Sei-Qwon Oh}
\address{Department of Mathematics, Chungnam National  University, 99 Daehak-ro,   Yuseong-gu, Daejeon 34134, Korea} \email{sqoh@cnu.ac.kr}
\author{Xingting Wang}
\address{Department of Mathematics, Temple University, Philadelphia, 19122, USA} \email{xingting@temple.edu}
\author{Xiaolan Yu}
\address{Department of Mathematics, Hangzhou Normal University, Hangzhou, Zhejiang 310036, China}\email{xlyu@hznu.edu.cn}


\subjclass[2010]{17B63, 16S10}

\keywords{Double Ore extension, double Poisson-Ore extension, Poisson enveloping algebra}



\begin{abstract}
It is proved that the Poisson enveloping algebra of a double Poisson-Ore extension is an iterated double Ore extension.
As an application, properties that are preserved under iterated double Ore extensions are invariants of the Poisson enveloping algebra of a double Poisson-Ore extension.
\end{abstract}

\maketitle


\section*{Introduction}

Let $R$ be a Poisson algebra. In  \cite{Oh5}, the second author constructed an associative algebra $R^e$, called the Poisson enveloping algebra of $R$,  in order that
the category of Poisson modules over $R$ is equivalent to that of modules over $R^e$. Since then the subject has been developed in \cite{Umi} and \cite{LuWaZh}.
In particular, the first, the third authors and Zhuang  studied the Poisson enveloping algebra of a Poisson-Ore extension of $R$ in \cite{LuWaZh2} and showed that it is an iterated Ore extension of $R^e$ and inherits algebraic properties of $R^e$ including noetherianess, Artin-Schelter regularity and etc. On the other hand,  in \cite{LoOhWa},  Lou, Wang and the second author gave a notion of double Poisson-Ore extension arising from the semi-classical limit of certain double Ore extension, which can be thought as a generalized Poisson-Ore extension with two variables. This motivates us to show that the Poisson enveloping algebras of  double Poisson-Ore extensions have algebraic properties similar to those obtained in \cite{LuWaZh2}.

 In the section 1, we modify the construction of Poisson enveloping algebra given in \cite[\S5]{LuWaZh} to be understood easily. Namely, let $R$ be any Poisson algebra over a basis field ${\bf k}$ and let $\Omega_{R/{\bf k}}$ be the K$\ddot{a}$hler differential
of $R$. Then it is observed that $\Omega_{R/{\bf k}}$ is a Lie algebra with Lie bracket induced by the Poisson bracket. Hence there exists a semi-crossed product $R\rtimes U(\Omega_{R/{\bf k}})$ of $R$, where  $U(\Omega_{R/{\bf k}})$ is the universal enveloping algebra of the Lie algebra $\Omega_{R/{\bf k}}$.
We see in Proposition~\ref{PENV} that the Poisson enveloping algebra $R^e$ is a quotient algebra of $R\rtimes U(\Omega_{R/{\bf k}})$ by certain ideal.
 Let $A=R[x_1,x_2]_p$ be a
double Poisson-Ore extension of $R$. In the section 2, we obtain a Poisson enveloping algebra $A^e$ by using the result
of the section 1 and find a valuable filtration $\mathcal{F}$ by giving suitable degrees
on each canonical generators of $A^e$.  Finally we
prove in Theorem~\ref{ENVDPE} that $A^e$ is an iterated double Ore extension by using algebraic properties of the  graded algebra associated to $\mathcal{F}$. The method of using the filtration $\mathcal{F}$ makes us avoid tedious computations in \cite[\S2]{LuWaZh2} as observed in Remark~\ref{RemarkGR}.
As an application of the fact that $A^e$ is an iterated double Ore extension of $R^e$, we induce, under certain conditions, invariants of algebraic properties in Corollary~\ref{IteratedORE}, Corollary~\ref{ENVDPE2} and Corollary~\ref{GRENVDPE} including maximal order, Artin-Schelter regular algebra, Calabi-Yau algebra, Koszul algebra and Auslander-Gorenstein algebra, which are true in most of the examples we are interested.

\medskip
Assume throughout the paper that ${\bf k}$ denotes a base field of characteristic zero, that all vector spaces are over ${\bf k}$ and that all algebras have unity.
A {\it Poisson algebra} is a commutative ${\bf k}$-algebra $R$ with a Poisson bracket, that is a bilinear map
$\{-,-\}:R\times R\rightarrow R$ such that $R$ is a Lie algebra
under $\{-,-\}$ and, for all $a\in R$, the hamiltonian map $\ham(a):=\{a,-\}$ is a derivation of $R$, which is called {\it Leibniz rule}.

For an algebra $A$, we denote by $A_L$  the Lie algebra $A$ with Lie bracket $[a,b]:=ab-ba$ for $a,b\in A$.

\section{Poisson enveloping algebra}

For the clearance of the structure of a Poisson enveloping algebra, we will modify the construction of Poisson enveloping algebra given in \cite[\S5]{LuWaZh}, which will be used in the next section.

Let $R$ be a Poisson algebra and $U$ be an algebra. For an algebra homomorphism $\alpha:R\longrightarrow U$ and a Lie algebra homomorphism $\beta:(R,\{-,-\})\longrightarrow U_L$, the pair $(\alpha,\beta)$ is said to {\it satisfy the  property ${\bf P}$ from $R$ into $U$} if $\alpha$ and $\beta$ satisfy the following properties: for all $a,b\in R$,
$$\begin{aligned}
\alpha(\{a,b\})&=[\beta(a),\alpha(b)], &
\beta(ab)&=\alpha(a)\beta(b)+\alpha(b)\beta(a).
\end{aligned}$$

Recall the definition of Poisson enveloping algebra in \cite[Definition 3]{Oh5}.
  A triple $(U, \alpha,\beta)$,
where $U$ is an algebra and the pair $(\alpha,\beta)$ satisfies the property {\bf P} from  a Poisson algebra $R$ into $U$,
 is called the {\it Poisson enveloping algebra} of $R$
if the following universal property holds:   For any triple $(A,\gamma, \delta)$ such that $A$ is an algebra and the pair $(\gamma,\delta)$ satisfies the property {\bf P} from $R$ into $A$, there exists a unique algebra homomorphism $h$ from $U$ into $A$
such that $h\alpha=\gamma$ and $h\beta=\delta$. The algebra homomorphism $\alpha$ is a monomorphism by \cite[Proposition 2.2]{OhPaSh2} and the Poisson enveloping algebra of any Poisson algebra exists uniquely up to isomorphism by \cite[Theorem 5]{Oh5}. We will denote by $R^e$ the Poisson enveloping algebra of $R$.

Given a Poisson algebra $R$, let  $F$ be a free left $R$-module with basis $\{\text{d}r|r\in R\}$ and $M$ be a submodule of $F$ generated by the elements
\begin{equation}\label{Kahl}
\begin{aligned}
&\text{d}(r_1+r_2)-\text{d}r_1-\text{d}dr_2,\\
&\text{d}(r_1r_2)-r_1\text{d}r_2-r_2\text{d}r_1,\\
&\text{d} a
\end{aligned}
\end{equation}
for all $a\in{\bf k}$ and $r_1,r_2\in R$. Then the K$\ddot{a}$hler differential module of $R$ is
$$\Omega_{R/{\bf k}}:=F/M.$$
The induced map
$$\text{d}:R\longrightarrow \Omega_{R/{\bf k}},\ \ a\mapsto \text{d}a$$
is a derivation by (\ref{Kahl}).

Let $H=(H,m,u,\Delta,\epsilon,S)$ be a Hopf algebra. An algebra $A$ is said to be a {\it left $H$-module algebra} if $A$ is a left $H$-module satisfying
$$h\cdot(ab)=\sum_{(h)}(h_1\cdot a)(h_2\cdot b),\ \ h\cdot 1=\epsilon(h)1,\ \ \ \ h\in H,\ \  a,b\in A,$$
where $1$ is the unity of $A$ and $\Delta(h)=\sum_{(h)} h_1\otimes h_2$.  If $A$ is a left $H$-module algebra then there exists an algebra $A\otimes_{\bf k} H$ with multiplication
$$(a\otimes h)(b\otimes g)=\sum_{(h)}a(h_1\cdot b)\otimes h_2g,\ \  \ \ a,b\in A,\ \ h,g\in H$$
by \cite[Proposition 1.6.6]{Maj}. Such an algebra is called a {\it semi-crossed product} of $A$ and $H$ and denoted by $A\rtimes H$.

By \cite[Example 5.4]{LuWaZh}, $\Omega_{R/{\bf k}}$ is a Lie algebra over ${\bf k}$ with Lie bracket
\begin{equation}\label{Kahl2}
[a\text{d}r,b\text{d}s]=ab\text{d}\{r,s\}+a\{r,b\}\text{d}s-b\{s,a\}\text{d}r
\end{equation}
for all $a, b,r,s\in R$.
Let $U(\Omega_{R/{\bf k}})$ be the corresponding universal enveloping algebra.
Note that $U(\Omega_{R/{\bf k}})$ is a Hopf algebra with Hopf structure
$$\Delta(a\text{d}r)=a\text{d}r\otimes 1+1\otimes a\text{d}r,\ \ \epsilon(a\text{d}r)=0,\ \ S(a\text{d}r)=-a\text{d}r$$
for all $a,r\in R$.
Let us show that $R$ is a left $U(\Omega_{R/{\bf k}})$-module algebra.
For $a\text{d}r\in \Omega_{R/{\bf k}}$ and $b\in R$, define
\begin{equation}\label{ACT}
a\text{d}r\cdot b=a\{r,b\},
\end{equation}
which  is well-defined with respect to the relations (\ref{Kahl}).
The action (\ref{ACT}) makes $R$  a left $\Omega_{R/{\bf k}}$-module and thus $R$ is a left $U(\Omega_{R/{\bf k}})$-module.
Since every element of $\Omega_{R/{\bf k}}$ acts as a derivation on $R$, $R$ is a left $U(\Omega_{R/{\bf k}})$-module algebra.
It follows that there exists the semi-crossed product $R\rtimes U(\Omega_{R/{\bf k}})$, as observed in the above paragraph. That is, $R\rtimes U(\Omega_{R/{\bf k}})$ is the algebra $R\otimes_{\bf k} U(\Omega_{R/{\bf k}})$ with  multiplication
\begin{equation}\label{Kahl3}
(a\otimes f)(b\otimes g)=\sum_{(f)}a(f_1\cdot b)\otimes f_2g.
\end{equation}
for $a,b\in R$ and $f,g\in U(\Omega_{R/{\bf k}})$.

Note that $\Omega_{R/{\bf k}}$ is a Lie algebra over ${\bf k}$ as well as a left $R$-module and that $\Omega_{R/{\bf k}}\subset U(\Omega_{R/{\bf k}})$. Hence
$$U(\Omega_{R/{\bf k}})={\bf k}+\Omega_{R/{\bf k}}+(\Omega_{R/{\bf k}})^2+(\Omega_{R/{\bf k}})^3+\ldots$$
as ${\bf k}$-vector spaces and the subspace $U^{\geq1}:=\sum_{i\geq1}(\Omega_{R/{\bf k}})^i$ of $U(\Omega_{R/{\bf k}})$ is a left $R$-submodule.
\begin{lem}
Let $a,b,r,s\in R$. In $U(\Omega_{R/{\bf k}})$,
\begin{equation}\label{LEM1}
(a\text{d}r)(b\text{d}s)=ab\text{d}r\text{d}s+a\{r,b\}\text{d}s.
\end{equation}
\end{lem}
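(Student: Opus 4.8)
The plan is to obtain (\ref{LEM1}) by reading off the multiplication rule (\ref{Kahl3}) of the semi-crossed product $R\rtimes U(\Omega_{R/{\bf k}})$, in which the coefficient-bearing elements $a\,\mathrm dr$ and $b\,\mathrm ds$ naturally sit via the identification $a\,\mathrm dr=a\otimes\mathrm dr=(a\otimes1)(1\otimes\mathrm dr)$ (the last equality holding because $\Delta(1)=1\otimes1$ and $1\cdot b=b$). The only structural inputs I need are that each generator $\mathrm dr\in\Omega_{R/{\bf k}}$ is primitive in the Hopf algebra $U(\Omega_{R/{\bf k}})$, so that $\Delta(\mathrm dr)=\mathrm dr\otimes1+1\otimes\mathrm dr$, together with the explicit module action (\ref{ACT}).

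First I would apply (\ref{Kahl3}) with $f=\mathrm dr$ and $g=\mathrm ds$, expanding the Sweedler sum over the two terms of $\Delta(\mathrm dr)$. The summand coming from $1\otimes\mathrm dr$ contributes $a(1\cdot b)\otimes\mathrm dr\,\mathrm ds=ab\,\mathrm dr\,\mathrm ds$, while the summand coming from $\mathrm dr\otimes1$ contributes $a(\mathrm dr\cdot b)\otimes\mathrm ds$. Next I would evaluate the action $\mathrm dr\cdot b$ by (\ref{ACT}) (with scalar $1$), which gives $\mathrm dr\cdot b=\{r,b\}$, so the second summand equals $a\{r,b\}\,\mathrm ds$. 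Adding the two contributions produces exactly the right-hand side of (\ref{LEM1}).

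Since the computation is essentially a single line, there is no serious obstacle; the only things to get right are bookkeeping. One must use the coproduct of $\mathrm dr$ correctly and keep the $R$-factor and the $U(\Omega_{R/{\bf k}})$-factor separate until the final step. It is worth emphasizing why the cross term is the single summand $a\{r,b\}\,\mathrm ds$ with no further corrections: this is precisely the primitivity of $\mathrm dr$, and a generator with a more complicated coproduct would force additional terms. Thus the heart of the matter is conceptual—recognizing that the product is governed by the semi-crossed multiplication (\ref{Kahl3}) and that $\mathrm dr$ is primitive—rather than computational.
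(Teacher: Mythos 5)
There is a genuine gap here: the computation takes place in the wrong algebra. The lemma asserts an identity in $U(\Omega_{R/{\bf k}})$, where $a\text{d}r$ and $b\text{d}s$ are elements of the Lie algebra $\Omega_{R/{\bf k}}$ (with $a\text{d}r$ denoting the left $R$-module action of $a$ on $\text{d}r$) and the product is that of the universal enveloping algebra. Your computation takes place in the semi-crossed product $R\rtimes U(\Omega_{R/{\bf k}})$ and correctly yields
$$(a\otimes\text{d}r)(b\otimes\text{d}s)=ab\otimes\text{d}r\text{d}s+a\{r,b\}\otimes\text{d}s,$$
but the identification $a\text{d}r=a\otimes\text{d}r$ on which you rely is not available there: under the canonical embedding $U(\Omega_{R/{\bf k}})\hookrightarrow R\rtimes U(\Omega_{R/{\bf k}})$, $f\mapsto 1\otimes f$, the element $a\text{d}r$ corresponds to $1\otimes a\text{d}r$, which is a \emph{different} element of $R\rtimes U(\Omega_{R/{\bf k}})$ from $a\otimes\text{d}r$; the two become equal only after passing to the quotient $R^e$ in (\ref{Kahl4}). (Computing instead with the correct representative gives $(1\otimes a\text{d}r)(1\otimes b\text{d}s)=1\otimes(a\text{d}r)(b\text{d}s)$, which is vacuous.) To convert your semi-crossed-product identity into the claimed identity in $U(\Omega_{R/{\bf k}})$ you would need the evaluation map $a\otimes f\mapsto af$ to be multiplicative on elements of positive length --- and that multiplicativity is precisely what the authors establish \emph{later}, using the present lemma as an ingredient, so your route is circular within the paper's logical order.

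The paper's own proof never leaves $U(\Omega_{R/{\bf k}})$: writing $(a\text{d}r)(b\text{d}s)=a\bigl((\text{d}r)(b\text{d}s)\bigr)$, it applies the defining relation $uv=vu+[u,v]$ of the enveloping algebra twice, together with the explicit bracket (\ref{Kahl2}), so that the terms $ab\,\text{d}\{s,r\}$ and $ab\,\text{d}\{r,s\}$ cancel and only $ab\,\text{d}r\text{d}s+a\{r,b\}\text{d}s$ survives. Your Sweedler-type calculation is the right mechanism for the quotient algebra $R^e$, where a relation of exactly this shape does hold, but it does not prove the statement as located in $U(\Omega_{R/{\bf k}})$; to repair it you would either have to redo it intrinsically in $U(\Omega_{R/{\bf k}})$ as above, or first prove (independently of this lemma) that $a\otimes f\mapsto af$ respects products.
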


\begin{proof}
We have that
$$\begin{aligned}(a\text{d}r)(b\text{d}s)&=a(\text{d}r)(b\text{d}s)&&\\
&=a(b\text{d}s\text{d}r+b\text{d}\{r,s\}+\{r,b\}\text{d}s)&&(\text{by }(\ref{Kahl2}))\\
&=ab(\text{d}s\text{d}r)+ab\text{d}\{r,s\}+a\{r,b\}\text{d}s&&\\
&=ab(\text{d}r\text{d}s+\text{d}\{s,r\})+ab\text{d}\{r,s\}+a\{r,b\}\text{d}s&&(\text{by }(\ref{Kahl2}))\\
&=ab\text{d}r\text{d}s+a\{r,b\}\text{d}s.
\end{aligned}$$
\end{proof}

\begin{lem}\label{LEM11}
The ${\bf k}$-algebra $R\rtimes U(\Omega_{R/{\bf k}})$ is generated by the elements
$$a\otimes1,\ \ \ 1\otimes b\text{d}r\ \ (a,b,r\in R).$$
\end{lem}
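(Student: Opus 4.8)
The plan is to express an arbitrary simple tensor $a\otimes g$ as a product of the two proposed families of generators. The whole argument is driven by the multiplication rule (\ref{Kahl3}), the module-algebra identity $h\cdot 1=\epsilon(h)1$, and the counit axiom $\sum_{(f)}\epsilon(f_1)f_2=f$; no computation with the explicit Lie bracket (\ref{Kahl2}) is needed.

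First I would record two consequences of (\ref{Kahl3}). Taking $f=1$, so that $\Delta(1)=1\otimes 1$ and $1\cdot b=b$, gives
$$(a\otimes1)(1\otimes g)=a\otimes g$$
for all $a\in R$ and $g\in U(\Omega_{R/{\bf k}})$. Next, setting $a=b=1$ and using $f_1\cdot1=\epsilon(f_1)1$ together with the counit axiom yields
$$(1\otimes f)(1\otimes g)=\sum_{(f)}\epsilon(f_1)\,1\otimes f_2g=1\otimes fg,$$
so that $g\mapsto1\otimes g$ is an algebra embedding of $U(\Omega_{R/{\bf k}})$ into $R\rtimes U(\Omega_{R/{\bf k}})$.

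The second identity is the key structural fact: it shows the subalgebra $1\otimes U(\Omega_{R/{\bf k}})$ is a homomorphic copy of $U(\Omega_{R/{\bf k}})$. Since $U(\Omega_{R/{\bf k}})$ is generated as a ${\bf k}$-algebra by $\Omega_{R/{\bf k}}$, which in turn is spanned by the elements $b\,\text{d}r$, it follows that $1\otimes U(\Omega_{R/{\bf k}})$ is generated by the elements $1\otimes b\,\text{d}r$. Combining this with the first identity, every simple tensor factors as $a\otimes g=(a\otimes1)(1\otimes g)$, where $a\otimes1$ is one of the listed generators and $1\otimes g$ already lies in the subalgebra generated by the $1\otimes b\,\text{d}r$. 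Because such simple tensors span $R\rtimes U(\Omega_{R/{\bf k}})=R\otimes_{\bf k}U(\Omega_{R/{\bf k}})$ over ${\bf k}$, the claim follows.

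I expect the only delicate point to be the bookkeeping in the second computation, namely correctly invoking the left module-algebra condition $f_1\cdot1=\epsilon(f_1)1$ and then collapsing $\sum_{(f)}\epsilon(f_1)f_2$ to $f$ via the counit axiom; everything else is a formal consequence of the semi-crossed product structure and the generation of a universal enveloping algebra by its underlying Lie algebra.
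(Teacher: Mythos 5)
Your proposal is correct and follows essentially the same route as the paper: both decompose $a\otimes f=(a\otimes1)(1\otimes f)$ and then use the fact that $1\otimes(-)$ is multiplicative (the paper states this only for products of two elements $b\,\text{d}r$, $c\,\text{d}s$, while you verify the general identity $(1\otimes f)(1\otimes g)=1\otimes fg$ via the counit axiom, which is a slightly cleaner packaging of the same step). No gaps.
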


\begin{proof}
Note that every element of $R\rtimes U(\Omega_{R/{\bf k}})$ is a ${\bf k}$-linear combination of the elements
$a\otimes f$ for some $a\in R$ and $f\in U(\Omega_{R/{\bf k}})$ and that $f$ is ${\bf k}$-linear combination of  finite products of the form $b\text{d} r$ for some $b,r\in R$. Hence the result is proved easily  from the following multiplicative rules
\begin{equation}\label{LEM111
}\begin{aligned}
&a\otimes f=(a\otimes 1)(1\otimes f),\\
 &(1\otimes b\text{d}r)(1\otimes c\text{d}s)=1\otimes (b\text{d}r)(c\text{d}s),
 \end{aligned}
 \end{equation}
where $a,b,c,r,s\in R$ and $f\in U(\Omega_{R/{\bf k}})$.
\end{proof}

Denote by $R^e$ the quotient algebra
\begin{equation}\label{Kahl4}
R^e:=R\rtimes U(\Omega_{R/{\bf k}})/( a\otimes \text{d}r-1\otimes a\text{d}r\ |\ a,r\in R)
\end{equation}
and let  $i$ and $\text{d}$ be the canonical maps
$$\begin{aligned}
&i:R\longrightarrow R^e,&& i(a)=\overline{a\otimes 1},\\
&\text{d}:R\longrightarrow R^e,&& \text{d}(a)=\overline{1\otimes\text{d}a}.
\end{aligned}$$

\begin{lem}\label{LEM1111}
The canonical maps  $i$ and $\text{d}$ are  an algebra homomorphism and a Lie algebra homomorphism from $(R,\{-,-\})$ into $R^e_L$, respectively, and
the pair $(i,\text{d})$ satisfies the property {\bf P} from $R$ into $R^e$.
\end{lem}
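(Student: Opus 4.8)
The plan is to reduce the statement to four separate verifications and to carry each one out by computing inside the semi-crossed product $R\rtimes U(\Omega_{R/{\bf k}})$ and then pushing the result through the canonical projection defining $R^e$ in (\ref{Kahl4}). Concretely I must check: (i) $i$ is an algebra homomorphism; (ii) $\text{d}(\{a,b\})=[\text{d}(a),\text{d}(b)]$, so that $\text{d}$ is a Lie homomorphism into $R^e_L$; (iii) $i(\{a,b\})=[\text{d}(a),i(b)]$; and (iv) $\text{d}(ab)=i(a)\text{d}(b)+i(b)\text{d}(a)$. The last two are exactly the defining identities of property {\bf P}. For (i), I would observe that $a\mapsto a\otimes1$ is already an algebra homomorphism $R\to R\rtimes U(\Omega_{R/{\bf k}})$: since $\Delta(1)=1\otimes1$, the rule (\ref{Kahl3}) gives $(a\otimes1)(b\otimes1)=ab\otimes1$, and $1\otimes1$ is the unit; composing with the projection shows $i$ is an algebra homomorphism.

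For the two property-{\bf P} identities (iii) and (iv) everything comes down to two computations in $R\rtimes U(\Omega_{R/{\bf k}})$. For (iii) I would expand $(1\otimes\text{d}a)(b\otimes1)$ via (\ref{Kahl3}), using that $\text{d}a$ is primitive, $\Delta(\text{d}a)=\text{d}a\otimes1+1\otimes\text{d}a$, and the action (\ref{ACT}) in the form $\text{d}a\cdot b=\{a,b\}$; this yields $\{a,b\}\otimes1+b\otimes\text{d}a$, while $(b\otimes1)(1\otimes\text{d}a)=b\otimes\text{d}a$. Subtracting and projecting gives $[\text{d}(a),i(b)]=\overline{\{a,b\}\otimes1}=i(\{a,b\})$. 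For (iv) I would use that $\text{d}$ is a derivation on $R$, so $\text{d}(ab)=a\text{d}b+b\text{d}a$ in $\Omega_{R/{\bf k}}$, and then invoke the defining relation $\overline{a\otimes\text{d}r}=\overline{1\otimes a\text{d}r}$ of (\ref{Kahl4}) together with $(a\otimes1)(1\otimes\text{d}b)=a\otimes\text{d}b$ to identify $\overline{1\otimes a\text{d}b}$ with $i(a)\text{d}(b)$, and symmetrically for the term $\overline{1\otimes b\text{d}a}$.

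For (ii), the cleanest route is to recognize that the commutator $(\text{d}a)(\text{d}b)-(\text{d}b)(\text{d}a)$ computed in $U(\Omega_{R/{\bf k}})$ coincides with the Lie bracket $[\text{d}a,\text{d}b]$ of $\Omega_{R/{\bf k}}$, which by (\ref{Kahl2}) equals $\text{d}\{a,b\}$. Transporting this across $1\otimes(-)$ by means of the rule $(1\otimes f)(1\otimes g)=1\otimes fg$ from Lemma~\ref{LEM11} and projecting then gives $[\text{d}(a),\text{d}(b)]=\overline{1\otimes\text{d}\{a,b\}}=\text{d}(\{a,b\})$, so $\text{d}$ is a Lie algebra homomorphism.

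The individual computations are all short; the only genuine care required is bookkeeping: one must keep the two tensor slots straight when applying the comultiplication in (\ref{Kahl3}), and must check that each manipulation respects the K\"ahler relations (\ref{Kahl}) and descends to the quotient (\ref{Kahl4}). I expect the main (if modest) obstacle to be the correct handling of the primitivity of $\text{d}a$ in the semi-crossed multiplication in step (iii), since this is the one place where the Hopf structure intervenes nontrivially, rather than acting trivially through $\Delta(1)=1\otimes1$.
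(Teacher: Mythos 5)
Your proposal is correct and follows essentially the same route as the paper's own proof: the same four verifications, each carried out by the same computations in $R\rtimes U(\Omega_{R/{\bf k}})$ using (\ref{Kahl3}), the primitivity of $\text{d}a$, the action (\ref{ACT}), the bracket (\ref{Kahl2}), and the quotient relation (\ref{Kahl4}). Nothing is missing.
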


\begin{proof}
It is clear that $i$ and $\text{d}$ are ${\bf k}$-linear maps. For $a,b,r,s\in R$,
$$i(ab)=\overline{ab\otimes 1}=\overline{(a\otimes1)(b\otimes1)}=i(a)i(b)$$
and
$$\begin{aligned}
\ [\text{d}(r), \text{d}(s)]&=\overline{(1\otimes\text{d}r)(1\otimes\text{d}s)-(1\otimes\text{d}s)(1\otimes\text{d}r)}\\
&=\overline{1\otimes[\text{d}r,\text{d}s]}&&(\text{by (\ref{Kahl3})})\\
&=\overline{1\otimes\text{d}\{r,s\}}&&(\text{by (\ref{Kahl2})})\\
&=\text{d}(\{r,s\}).
\end{aligned}
$$
Thus $i$ is an algebra homomorphism and $\text{d}:(R,\{-,-\})\longrightarrow R^e_L$ is a Lie algebra homomorphism.

For $a,r,s\in R$,
$$
\begin{aligned}
\ [\text{d}(r), i(a)]&=\overline{(1\otimes \text{d}r)(a\otimes 1)-(a\otimes1)(1\otimes \text{d}r)}\\
&=\overline{(\text{d}r\cdot a)\otimes 1+(1\cdot a)\otimes\text{d}r-(a\otimes \text{d}r)}&&(\text{by (\ref{Kahl3})})\\
&=\overline{\{r,a\}\otimes 1}&&(\text{by (\ref{ACT})})\\
&=i(\{r,a\})
\end{aligned}$$
 and
$$
\begin{aligned}
\text{d}(rs)&=\overline{1\otimes \text{d}(rs)}\\
&=\overline{1\otimes(r\text{d}s+s\text{d}r)}&&(\text{by (\ref{Kahl})})\\
&=\overline{1\otimes r\text{d}s}+\overline{1\otimes s\text{d}r}\\
&=\overline{r \otimes \text{d}s}+\overline{s\otimes \text{d}r}&&(\text{by (\ref{Kahl4})})\\
&=\overline{(r\otimes1)(1 \otimes \text{d}s)}+\overline{(s\otimes1)(1\otimes \text{d}r)}&&(\text{by (\ref{Kahl3})})\\
&=i(r)\text{d}(s)+i(s)\text{d}(r).
\end{aligned}
$$
Thus the pair $(i,\text{d})$ satisfies the property {\bf P} from $R$ into $R^e$.
\end{proof}

\begin{prop}\label{PENV}
Let $R$ be a Poisson algebra.

(1) The K$\ddot{a}$hler differential   $\Omega_{R/{\bf k}}$ is a left $R$-module as well as a ${\bf k}$-Lie algebra with Lie bracket (\ref{Kahl2}). Denote by $U(\Omega_{R/{\bf k}})$ the universal enveloping algebra of $\Omega_{R/{\bf k}}$.

(2) The Poisson algebra $R$ is a left $U(\Omega_{R/{\bf k}})$-module algebra with action
$$a \text{d}r\cdot b=a\{r,b\}$$
for all $a,b,r\in R$. Hence there exists the semi-crossed product $R\rtimes U(\Omega_{R/{\bf k}})$ with multiplication
(\ref{Kahl3}).

(3) The triple $(R^e, i,\text{d})$ is the Poisson enveloping algebra of $R$, where
$$R^e=R\rtimes U(\Omega_{R/{\bf k}})/(a\otimes \text{d}r-1\otimes a\text{d}r\ |\ a,r\in R)$$
$$\begin{aligned}
&i:R\longrightarrow R^e,&& i(a)=\overline{a\otimes 1}\\
&\text{d}:R\longrightarrow R^e,&& \text{d}(a)=\overline{1\otimes\text{d}a}.
\end{aligned}$$

Note that $i$ is injective by \cite[Proposition 2.2]{OhPaSh2}.
Writing $a$ and $\text{d}r$ for the images $i(a)$ and $\text{d}(r)$ respectively,  $R^e$ is a ${\bf k}$-algebra generated by $R$ and $\text{d}r$ for all $r\in R$  subject to the relations
\begin{equation}\label{Kahl5}
\begin{aligned}
\ [\text{d}r,a]&=\{r,a\}\\
[\text{d}r,\text{d}s]&=\text{d}\{r,s\}\\
\end{aligned}
\end{equation}
for $a,r,s\in R$.

(4) Let $S$ be a Poisson subalgebra of $R$. Then the Poisson enveloping algebra of $S$ is $$S^e=(S^e, i|_S,\text{d}|_S),$$ where $S^e$ is  the subalgebra of $R^e$ generated by $S$ and $\text{d}s$ for all $s\in S$ and
 $i|_S$ and $\text{d}|_S$ are the restrictions of $i$ and $\text{d}$ respectively.
\end{prop}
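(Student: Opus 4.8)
The plan is to treat (1) and (2) as bookkeeping, to devote the main effort to the universal property in (3), and to reduce (4) to an injectivity statement that I expect to be the genuine obstacle. For (1), the left $R$-module structure on $\Omega_{R/{\bf k}}$ is built into its definition as $F/M$, and that the bracket (\ref{Kahl2}) makes it a ${\bf k}$-Lie algebra is \cite[Example 5.4]{LuWaZh}; I would simply record these. For (2), both the claim that the action $a\,\text{d}r\cdot b=a\{r,b\}$ makes $R$ a left $U(\Omega_{R/{\bf k}})$-module algebra and the resulting existence of the semi-crossed product $R\rtimes U(\Omega_{R/{\bf k}})$ with multiplication (\ref{Kahl3}) were already carried out in the discussion preceding Lemma~\ref{LEM11}, so I would cite that.

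For (3), Lemma~\ref{LEM1111} already shows that $(i,\text{d})$ satisfies property {\bf P} from $R$ into $R^e$, so the task is the universal property: given any triple $(A,\gamma,\delta)$ with $(\gamma,\delta)$ satisfying property {\bf P}, produce a unique algebra homomorphism $h\colon R^e\to A$ with $hi=\gamma$ and $h\text{d}=\delta$. I would build $h$ in three steps. First, view $A$ as a left $R$-module through $\gamma$ and extend $\text{d}r\mapsto\delta(r)$ to the $R$-linear map $a\,\text{d}r\mapsto\gamma(a)\delta(r)$ on the free module $F$; this descends to a ${\bf k}$-linear map $\phi\colon\Omega_{R/{\bf k}}\to A$ because the second identity of property {\bf P} handles the Leibniz relation in (\ref{Kahl}), while $\delta(1)=0$ (take $a=b=1$ there), hence $\delta|_{\bf k}=0$, handles the scalar relation $\text{d}c$. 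A direct computation, using both identities of property {\bf P} and $\delta(\{r,s\})=[\delta(r),\delta(s)]$, shows $\phi$ is a homomorphism of Lie algebras from $\Omega_{R/{\bf k}}$, with the bracket (\ref{Kahl2}), into $A_L$, hence extends to an algebra homomorphism $\Phi\colon U(\Omega_{R/{\bf k}})\to A$. Second, I would check that $\Psi(a\otimes f)=\gamma(a)\Phi(f)$ respects the multiplication (\ref{Kahl3}); the only nontrivial compatibility reduces, on the generators $1\otimes a'\text{d}r$ and $b\otimes 1$, to the first identity of property {\bf P} together with commutativity of $R$, giving an algebra homomorphism $\Psi\colon R\rtimes U(\Omega_{R/{\bf k}})\to A$. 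Third, $\Psi$ annihilates the defining ideal, since $\Psi(a\otimes\text{d}r)=\gamma(a)\delta(r)=\Phi(a\text{d}r)=\Psi(1\otimes a\text{d}r)$, so $\Psi$ descends to the required $h$, and one reads off $hi=\gamma$ and $h\text{d}=\delta$. Uniqueness follows from Lemma~\ref{LEM11}: modulo the defining ideal $\overline{1\otimes b\text{d}r}=i(b)\text{d}(r)$, so $R^e$ is generated as an algebra by $i(R)$ and $\text{d}(R)$, on which $h$ is forced. Finally, the presentation by (\ref{Kahl5}) is exactly the content of Lemma~\ref{LEM1111}, and the universal property just proved guarantees there are no further relations.

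For (4), the inclusion $S\hookrightarrow R$ is a homomorphism of Poisson algebras, so composing it with $(i,\text{d})$ yields a pair satisfying property {\bf P} from $S$ into $R^e$; applying part (3) to $S$ and invoking its universal property produces an algebra homomorphism $\psi\colon S^e\to R^e$ with $\psi i_S=i|_S$ and $\psi\text{d}_S=\text{d}|_S$. Its image is generated by $i(S)$ and $\text{d}(S)$, i.e. it is the subalgebra $T$ named in the statement, so $\psi\colon S^e\to T$ is surjective, and the assertion amounts to proving $\psi$ injective. I expect this injectivity to be the main obstacle: it cannot be extracted from the universal property alone and instead needs a comparison of underlying vector spaces. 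The plan is to produce a PBW-type ${\bf k}$-basis of $R^e$ from the construction in (3)---informally, ordered products $i(a)\text{d}(r_1)\cdots\text{d}(r_n)$ controlled by the Poincar\'e--Birkhoff--Witt basis of $U(\Omega_{R/{\bf k}})$ and the collapsing relation $a\otimes\text{d}r=1\otimes a\text{d}r$---and to choose a ${\bf k}$-basis of $R$ restricting to one of $S$ so that the corresponding basis of $S^e$ embeds as a sub-collection of that of $R^e$; injectivity of $\psi$ then follows by matching basis elements. Once $\psi$ is an isomorphism onto $T$, transporting $i_S$ and $\text{d}_S$ shows that $(T,i|_S,\text{d}|_S)$ is the Poisson enveloping algebra of $S$.
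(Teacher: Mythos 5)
Your handling of (1)--(3) coincides with the paper's proof in every essential respect: the paper also dismisses (1) and (2) as already established in Section 1, and for (3) it constructs the same map $h'(b\text{d}r)=\gamma(b)\delta(r)$ on $\Omega_{R/{\bf k}}$, verifies it is a Lie algebra homomorphism into $A_L$, extends it to $U(\Omega_{R/{\bf k}})$, sets $h(a\otimes f)=\gamma(a)h'(f)$, checks that $h$ annihilates the generators $a\otimes \text{d}r-1\otimes a\text{d}r$ of the defining ideal, and deduces uniqueness from Lemma~\ref{LEM11}. Your remark that $\delta(1)=0$ (hence $\delta|_{\bf k}=0$) is what disposes of the scalar relation in (\ref{Kahl}) is in fact slightly more careful than the paper's one-line justification of well-definedness of $h'$.

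The divergence is in (4), and there you have correctly isolated a difficulty that the paper's proof does not engage with: the paper simply ``replaces $R$ by $S$ in the second statement of (3)'', which tacitly assumes that the subalgebra $T$ of $R^e$ generated by $S$ and $\text{d}S$ satisfies no relations beyond those forced by (\ref{Kahl5}), whereas you reduce the claim to injectivity of the canonical map $S^e\to T$ and rightly observe that this cannot come from the universal property alone. However, the repair you propose --- choosing PBW-type bases of $S^e$ and $R^e$ so that the former embeds as a sub-collection of the latter --- cannot be carried out in general, because the natural map $\Omega_{S/{\bf k}}\to\Omega_{R/{\bf k}}$ need not be injective, so no choice of generating families is compatible in the way your matching argument requires. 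Concretely, take $S={\bf k}[t^2,t^3]\subset R={\bf k}[t]$ with the zero Poisson bracket. The element $2t^2\,\text{d}(t^3)-3t^3\,\text{d}(t^2)$ is a nonzero torsion element of $\Omega_{S/{\bf k}}$, hence is nonzero in $S^e$ (it survives in the canonical quotient $\mathrm{Sym}_S(\Omega_{S/{\bf k}})$ of $S^e$), yet its image in $R^e=R[\text{d}t]$ is $2t^2(3t^2\text{d}t)-3t^3(2t\text{d}t)=0$. Thus $S^e\to T$ is not injective and $(T,i|_S,\text{d}|_S)$ fails the universal property already against the target $(S^e,i_S,\text{d}_S)$. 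So the gap in your step is genuine; the same example shows that statement (4) itself needs an extra hypothesis such as injectivity of $\Omega_{S/{\bf k}}\to\Omega_{R/{\bf k}}$ (which does hold in the only place (4) is used, namely $R\subset R[x_1,x_2]$ in Theorem~\ref{ENVDPE}, since $R[x_1,x_2]$ is free over $R$). Your instinct that the injectivity is ``the genuine obstacle'' is exactly right, but neither your sketch nor the paper's argument overcomes it in the stated generality.
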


\begin{proof}
(1) and  (2) are proved already.

(3) By Lemma~\ref{LEM1111}, the pair $(i,\text{d})$  satisfies the property {\bf P} from $R$ into $R^e$.
Let $A$ be an algebra and let $(\gamma,\delta)$ satisfy the property {\bf P} from $R$ into $A$.
Define a ${\bf k}$-linear map $h'$ from $\Omega_{R/{\bf k}}$ into $A$ by
$$h'(b\text{d}r)=\gamma(b)\delta(r)$$
for all $b,r\in R$. Since $\gamma$ is an algebra homomorphism and $\delta$ is a Lie algebra homomorphism, $h'$ satisfies the relations (\ref{Kahl}) and thus $h'$ is well defined. Moreover, for $b,c,r,s\in R$,
$$\begin{aligned}
h'([b\text{d}r, c\text{d}s])&=h'(bc\text{d}\{r,s\}+b\{r,c\}\text{d}s-c\{s,b\}\text{d}r)\ \ \ \ \ \ \ \ \ \ \ \ \ \  \ (\text{by (\ref{Kahl2})})\\
&=\gamma(bc)\delta(\{r,s\})+\gamma(b\{r,c\})\delta(s)-\gamma(c\{s,b\})\delta(r)\\
&=\gamma(bc)\delta(\{r,s\})+\gamma(b)\gamma(\{r,c\})\delta(s)-\gamma(c)\gamma(\{s,b\})\delta(r)\\
&=\gamma(bc)(\delta(r)\delta(s)-\delta(s)\delta(r))+\gamma(b)[\delta(r),\gamma(c)]\delta(s)-\gamma(c)[\delta(s),\gamma(b)]\delta(r)\\
& \hspace{3in}  (\text{by the property {\bf P}})\\
&=\gamma(b)\delta(r)\gamma(c)\delta(s)-\gamma(c)\delta(s)\gamma(b)\delta(r)\\
&=[\gamma(b)\delta(r),\gamma(c)\delta(s)]\\
&=[h'(b\text{d}r),h'(c\text{d}s)]
\end{aligned}$$
and thus $h'$ is a Lie algebra homomorphism from $\Omega_{R/{\bf k}}$ into $A_L$. It follows that $h'$ is extended to
$U(\Omega_{R/{\bf k}})$.

Define a ${\bf k}$-linear map  $h: R\rtimes U(\Omega_{R/{\bf k}})\longrightarrow A$ by $h(a\otimes f)=\gamma(a)h'(f)$.
Thus
\begin{equation}\label{Kahl6}
h(1\otimes b\text{d}r)=\gamma(1)h'(b\text{d}r)=\gamma(b)\delta(r),\ \ h(a\otimes1)=\gamma(a)h'(1)=\gamma(a)
\end{equation}
for $a,b,r\in R$. Note that  $R\rtimes U(\Omega_{R/{\bf k}})$ is generated by the elements of the form
$1\otimes b\text{d}r$ and $a\otimes1$  by Lemma~\ref{LEM11}. It is checked routinely that
$$\begin{aligned}
h((a\otimes 1)(b\otimes 1))&=h(a\otimes 1)h(b\otimes 1), \\
h((a\otimes1)(1\otimes b\text{d}r))&=h(a\otimes1)h(1\otimes b\text{d}r),\\
h((1\otimes b\text{d}r)(a\otimes1))&=h(1\otimes b\text{d}r)h(a\otimes1), \\
h((1\otimes a\text{d}s)(1\otimes b\text{d}r))&=h(1\otimes a\text{d}s)h(1\otimes b\text{d}r)
\end{aligned}$$
for all $a,b,r,s\in R$ and thus $h$ is an algebra homomorphism.
 Since
$$h(a\otimes \text{d}r)=\gamma(a)\delta(r)=h(1\otimes a\text{d}r)$$
for all $a,r\in R$, there exists the  algebra homomorphism $\overline{h}:R^e\longrightarrow A$  induced by $h$.
Since $\overline{h}i=\gamma$ and $\overline{h}\text{d}=\delta$ by (\ref{Kahl6}) and $R^e$ is generated by the images of $i$ and $\text{d}$, $\overline{h}$ is determined uniquely.
Hence $(R^e, i,\text{d})$ is a Poisson enveloping algebra of $R$.

By Lemma~\ref{LEM11} and (\ref{Kahl4}), $R^e$ is generated by $R$ and $\text{d} r$ for all $r\in R$.
The relations (\ref{Kahl5}) are already shown in the proof of Lemma~\ref{LEM1111}. Thus the remaining assertion holds.

(4) The restriction $\text{d}|_S$ satisfies (\ref{Kahl}), the pair $(i|_S,\text{d}|_S)$ satisfies the property ${\bf P}$ and $S^e$ is a ${\bf k}$-algebra generated by $S$ and $\text{d} s$ for $s\in S$  subject to the relations (\ref{Kahl5}). Hence, replacing $R$ by $S$ in the second statement of (3),  $S^e$ is the Poisson enveloping algebra of $S$.
\end{proof}

\section{Poisson enveloping algebra of double Poisson-Ore extension}

Let us recall a left double Ore extension, shortly a left double extension, of an algebra $R$ defined in \cite[\S1]{ZhZh}. (In which it is called a {\it right} double extension.) Let $\Bbb F$ be a commutative ${\bf k}$-algebra and let $R$ be an $\Bbb F$-algebra. An $\Bbb F$-algebra $A$ containing $R$ as a subalgebra is said to be a {\it left double extension} of $R$ if
$A$ is generated by $R$ and new variables $y_1,y_2$ such that
\begin{itemize}
\item $y_1$ and $y_2$ satisfy a relation
\begin{equation}\label{REL1}
y_2y_1=p_{11}y_1^2+p_{12}y_1y_2+\tau_1y_1+\tau_2y_2+\tau_0,
\end{equation}
where $P:=(p_{11},p_{12})\in\Bbb F^2$ and $\tau:=(\tau_1,\tau_2, \tau_0)\in R^3$,
\item As a left $R$-module, $A$ is a free left $R$-module with a basis $\{y_1^iy_2^j|i,j\geq0\}$,
\item $y_1R+y_2R+R\subseteq Ry_1+Ry_2+R$.
\end{itemize}
Hence there exist $\Bbb F$-linear maps $\sigma_{11},\sigma_{12},\sigma_{21},\sigma_{22},\delta_1,\delta_2$ from $R$ into itself such that
\begin{equation}\label{REL2}
\begin{aligned}
y_1a&=\sigma_{11}(a)y_1+\sigma_{12}(a)y_2+\delta_1(a),\\
y_2a&=\sigma_{21}(a)y_1+\sigma_{22}(a)y_2+\delta_2(a)
\end{aligned}
\end{equation}
for all $a\in R$.  Set
$$\begin{aligned}
&y:=\left(\begin{matrix} y_1\\ y_2\end{matrix}\right)\in M_{2\times1}(A),&&\\
&\sigma:R\longrightarrow M_{2\times2}(R),&\ \ \ \sigma(a)&=\left(\begin{matrix}\sigma_{11}(a)&\sigma_{12}(a)\\ \sigma_{21}(a)&\sigma_{22}(a)\end{matrix}\right),\\
&\delta:R\longrightarrow M_{2\times1}(R),&\ \ \ \delta(a)&=\left(\begin{matrix}\delta_{1}(a)\\ \delta_{2}(a)\end{matrix}\right).
\end{aligned}$$
Note that $ M_{2\times1}(A)$, $M_{2\times2}(R)$ and $M_{2\times1}(R)$ are both  left and  right $R$-modules and that (\ref{REL2}) is expressed explicitly by
$$ya=\sigma(a)y+\delta(a)$$
for all $a\in R$. We say that the left double extension $A$ of $R$ has the DE-data $\{P,\sigma,\delta,\tau\}$ and $A$ is denoted by
$$A=R[y_1,y_2;\sigma,\delta].$$

By symmetry, we have the notion of  {\it right} double Ore extension, shortly a right double extension. An algebra $A$ is said to be a {\it double Ore extension} of $R$, shortly a double extension, if it is a left and a right double  extension of $R$ with same generating set.

In \cite[Theorem 2.7]{LoOhWa}, a double Poisson-Ore extension is defined as the semiclassical limit of a left double extension as follows.
 Let $R$ be a Poisson ${\bf k}$-algebra with Poisson bracket $\{-,-\}_R$ and let $R[y_1,y_2]$ be the commutative polynomial ring. Set
 $$\begin{aligned}
&Q=\{q_{11},q_{12}\}\subset {\bf k},&w&=\{w_1,w_2, w_0\}\subset R,\\
&\alpha:R\longrightarrow M_{2\times2}(R),& \alpha(a)&=\left(\begin{matrix}\alpha_{11}(a)&\alpha_{12}(a)\\ \alpha_{21}(a)&\alpha_{22}(a)\end{matrix}\right),\\
&\nu:R\longrightarrow M_{2\times1}(R),& \nu(a)&=\left(\begin{matrix}\nu_{1}(a)\\ \nu_{2}(a)\end{matrix}\right),\\
&y=\left(\begin{matrix} y_1\\ y_2\end{matrix}\right)\in M_{2\times1}(R[y_1,y_2]).
\end{aligned}$$
Note that $ M_{2\times1}(R[y_1,y_2])$, $M_{2\times2}(R)$ and $M_{2\times1}(R)$ are  Poisson $R$-modules.
 Then $R[y_1,y_2]$  becomes a Poisson algebra with Poisson bracket
\begin{equation}\label{PB11}
\begin{aligned}
\{a,b\}&=\{a,b\}_R,\\
\{y_2,y_1\}&=q_{11}y_1^2+q_{12}y_1y_2+w_1y_1+w_2y_2+w_0,\\
\{y,a\}&=\alpha(a)y+\nu(a)
\end{aligned}
\end{equation}
for all $a,b\in R$ if and only if the DE-data $\{Q,\alpha,\nu,w\}$ satisfies the following conditions (a)-(e).
\begin{enumerate}
\item[(a)]  $\alpha(ab)=a\alpha(b)+b\alpha(a)$.
\item[(b)] $\nu(ab)=a\nu(b)+b\nu(a)$.
\item[(c)] $\alpha(\{a,b\})=\{\alpha(a),b\}+\{a,\alpha(b)\}+[\alpha(a),\alpha(b)]$.
\item[(d)] $\nu(\{a,b\})=\{\nu(a),b\}+\{a,\nu(b)\}+\alpha(a)\nu(b)-\alpha(b)\nu(a)$.
\item[(e)] $\{y_2,\{y_1,a\}\}+\{y_1,\{a,y_2\}\}+\{a,\{y_2,y_1\}\}=0$.
\end{enumerate}

The Poisson algebra $R[y_1,y_2]$ with Poisson bracket (\ref{PB11}) is called a {\it double Poisson-Ore extension} with DE-data $\{Q,\alpha,\nu,w\}$ and denoted by
$$R[y_1,y_2;\alpha,\nu]_p.$$

\begin{thm}\label{ENVDPE}
Let $R$ be a  Poisson algebra and let $A=R[x_1,x_2;\alpha,\nu]_p$ be a double Poisson-Ore extension of $R$ with DE-data
$$\left\{Q=(q_{11},q_{12}),\ \alpha=\begin{pmatrix}\alpha_{11}&\alpha_{12}\\ \alpha_{21}&\alpha_{22}  \end{pmatrix},\ \nu=\begin{pmatrix}\nu_{1}\\ \nu_{2} \end{pmatrix},\ w=(w_1,w_2,w_0)\right\}.$$
Then the Poisson enveloping algebra $A^e$  is an iterated double extension
$$R^e[x_1,x_2;\sigma^1,\delta^1][y_1,y_2;\sigma^2,\delta^2]$$
over the Poisson enveloping algebra $R^e$.
Where the DE-data  $\{P^1,\sigma^1,\delta^1,\tau^1\}$ of $R^e[x_1,x_2;\sigma^1,\delta^1]$ is
$$\begin{aligned}
P^1&=(0,1), &\tau^1&=(0,0,0),\\
\sigma^1(a)&=\begin{pmatrix}a&0\\ 0&a  \end{pmatrix},&
\sigma^1(\text{d}a)&=\begin{pmatrix}\alpha_{11}(a)+\text{d}a&\alpha_{12}(a)\\ \alpha_{21}(a)&\alpha_{22}(a)+\text{d}a  \end{pmatrix},\\
\delta^1(a)&=\begin{pmatrix}0\\ 0  \end{pmatrix},&
\delta^1(\text{d}a)&=\begin{pmatrix}\nu_{1}(a)\\ \nu_{2}(a) \end{pmatrix}
\end{aligned}$$
 for all $a\in R$
 and the DE-data $\{P^2,\sigma^2,\delta^2,\tau^2\}$ of $(R^e[x_1,x_2;\sigma^1,\delta^1])[y_1,y_2;\sigma^2,\delta^2]$ is
$$\begin{aligned}
P^2&=(0,1), \\
\tau^2&=(2q_{11}x_1+q_{12}x_2+w_1, \ q_{12}x_1+w_2, \ x_1\text{d}w_1+x_2\text{d}w_2+\text{d}w_0),\\
\sigma^2(a)&=\begin{pmatrix}a&0\\ 0&a  \end{pmatrix},\ \ \ \ \ \
\sigma^2(\text{d}a)=\begin{pmatrix}\alpha_{11}(a)+\text{d}a&\alpha_{12}(a)\\ \alpha_{21}(a)&\alpha_{22}(a)+\text{d}a  \end{pmatrix},\\
\sigma^2(x_1)&=\begin{pmatrix}x_1&0\\ 0&x_1  \end{pmatrix}, \ \ \
\sigma^2(x_2)=\begin{pmatrix}x_2&0\\ 0&x_2  \end{pmatrix}, \\
\delta^2(a)&=\begin{pmatrix}\alpha_{11}(a)x_1+\alpha_{12}(a)x_2+\nu_1(a)\\ \alpha_{21}(a)x_1+\alpha_{22}(a)x_2+\nu_2(a)  \end{pmatrix},&&\\
\delta^2(\text{d}a)&=\begin{pmatrix}x_1\text{d}\alpha_{11}(a)+x_2\text{d}\alpha_{12}(a) + \text{d}\nu_1(a)\\
x_1\text{d}\alpha_{21}(a)+x_2\text{d}\alpha_{22}(a) + \text{d}\nu_2(a) \end{pmatrix}, \\
\delta^2(x_1)&=\begin{pmatrix}0\\ q_{11}x_1^2+q_{12}x_1x_2+w_1x_1+w_2x_2+w_0 \end{pmatrix},\\
\delta^2(x_2)&=\begin{pmatrix}-(q_{11}x_1^2+q_{12}x_1x_2+w_1x_1+w_2x_2+w_0)\\ 0 \end{pmatrix}
\end{aligned}$$
for all $a\in R$.
\end{thm}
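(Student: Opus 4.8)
The plan is to present $A^e$ by generators and relations, read off the two blocks of DE-data directly from the defining relations (\ref{Kahl5}) of a Poisson enveloping algebra together with the Poisson bracket (\ref{PB11}) of $A$, and then use the filtration $\mathcal{F}$ only to certify that imposing these relations causes no collapse, i.e.\ that the resulting algebra is free of the expected size. First I would fix the generators. By Proposition~\ref{PENV}(3), $A^e$ is generated by $A=R[x_1,x_2]$ together with all $\text{d}c$, $c\in A$; and by Proposition~\ref{PENV}(4) the subalgebra generated by $R$ and $\text{d}R$ is a copy of $R^e$ sitting inside $A^e$. Since $A$ is generated as a ${\bf k}$-algebra by $R\cup\{x_1,x_2\}$ and $\text{d}$ is a derivation (relations (\ref{Kahl})), every $\text{d}c$ lies in the subalgebra generated by $A$, $\text{d}R$, $\text{d}x_1$, $\text{d}x_2$. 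Hence $A^e$ is generated over $R^e$ by the four elements $x_1,x_2,\text{d}x_1,\text{d}x_2$, and the iterated structure amounts to adjoining first $x_1,x_2$ and then $y_i:=\text{d}x_i$.

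Next I would compute the straightening relations, all of which come from the two families in (\ref{Kahl5}), namely $[\text{d}r,c]=\{r,c\}$ and $[\text{d}r,\text{d}s]=\text{d}\{r,s\}$, evaluated on the generators and rewritten using (\ref{PB11}) and the Leibniz rule for $\text{d}$. For instance $x_1\,\text{d}a=\text{d}a\,x_1+\{x_1,a\}$ with $\{x_1,a\}=\alpha_{11}(a)x_1+\alpha_{12}(a)x_2+\nu_1(a)$ gives precisely the first column of $\sigma^1(\text{d}a)$ and of $\delta^1(\text{d}a)$, the summand $\text{d}a$ being absorbed into $\sigma^1_{11}(\text{d}a)=\alpha_{11}(a)+\text{d}a$. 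Likewise $[\,\text{d}x_2,\text{d}x_1\,]=\text{d}\{x_2,x_1\}$, expanded by Leibniz, splits $\text{d}(q_{11}x_1^2+q_{12}x_1x_2+w_1x_1+w_2x_2+w_0)$ into the part carrying a factor $\text{d}x_1$ or $\text{d}x_2$ (yielding $\tau^2_1,\tau^2_2$) and the part $x_1\text{d}w_1+x_2\text{d}w_2+\text{d}w_0$ (yielding $\tau^2_0$). Carrying this out on each pair of generators reproduces, term by term, all of $\{P^i,\sigma^i,\delta^i,\tau^i\}$ in the forms (\ref{REL1})--(\ref{REL2}); this step is routine bookkeeping. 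These relations also show at once that $A^e$ is spanned over $R^e$ by the ordered monomials $x_1^ix_2^j(\text{d}x_1)^k(\text{d}x_2)^l$.

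The real content is freeness, since a priori the relations could force ${\bf k}$-linear dependences among these monomials. Here I would introduce $\mathcal{F}$ by assigning degrees $\deg r=0$ for $r\in R$, $\deg\text{d}r=1$, $\deg x_i=1$, $\deg\text{d}x_i=2$. Inspecting each relation above shows that in every case the two leading terms carry equal top degree while every correction term (the values of $\{-,-\}$, and the $\nu$-, $\alpha$-, $w$- and $\tau$-contributions) is strictly lower; hence $\mathcal{F}$ is an algebra filtration. Passing to $\operatorname{gr}_{\mathcal{F}}A^e$ kills all correction terms, so the images of $x_1,x_2,\text{d}x_1,\text{d}x_2$ become central and pairwise commuting, while $\operatorname{gr}_{\mathcal{F}}R^e$ becomes the commutative algebra $\operatorname{Sym}_R\Omega_{R/{\bf k}}$. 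Thus $\operatorname{gr}_{\mathcal{F}}A^e$ is a commutative iterated double extension of $\operatorname{gr}_{\mathcal{F}}R^e$ with constant $\sigma$ and vanishing $\delta,\tau$, which is visibly free on the expected monomials; its size matches the upper bound coming from the semi-crossed-product presentation of Proposition~\ref{PENV}(3), where $\Omega_{A/{\bf k}}\cong(A\otimes_R\Omega_{R/{\bf k}})\oplus A\,\text{d}x_1\oplus A\,\text{d}x_2$. Freeness then lifts from $\operatorname{gr}_{\mathcal{F}}A^e$ to $A^e$ by the standard filtered--graded comparison, so the monomials $x_1^ix_2^j(\text{d}x_1)^k(\text{d}x_2)^l$ form an $R^e$-basis, and together with step two this exhibits $A^e$ as the left iterated double extension with the stated DE-data; the right-hand straightening relations follow by the symmetric computation with $[c,\text{d}r]$ and $[\text{d}s,\text{d}r]$, giving a genuine two-sided iterated double Ore extension in the sense of \cite{ZhZh}.

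I expect the main obstacle to be exactly this freeness/PBW step: ruling out that the relations shrink $A^e$. The filtration resolves it by trivialising $\operatorname{gr}_{\mathcal{F}}A^e$ to a commutative polynomial extension of $\operatorname{Sym}_R\Omega_{R/{\bf k}}$, whose basis can be read off directly; the only delicate point is matching the lower bound on its size (from the $U(\Omega_{A/{\bf k}})$-presentation) against the evident upper bound, after which linear independence in $A^e$ is immediate and the verification of the double-extension axioms becomes automatic rather than computational.
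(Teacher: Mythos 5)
Your overall strategy coincides with the paper's: present $A^e$ over $R^e$ by the four generators $x_1,x_2,\text{d}x_1,\text{d}x_2$, read the DE-data off the commutation relations coming from (\ref{Kahl5}) and (\ref{PB11}) (your bookkeeping here matches the paper's relations and the stated $\{P^i,\sigma^i,\delta^i,\tau^i\}$), and then establish freeness by a filtered--graded argument. The gap is in the freeness step, which you yourself identify as the real content. You filter by a single integer degree with $\deg\text{d}r=\deg x_i=1$ and $\deg\text{d}x_i=2$. With this choice the symbols $\overline{\text{d}a}$ and $\overline{x}_i$ land in the same graded component, so commutativity of $\operatorname{gr}_{\mathcal F}(A^e)$ does not make it ``visibly free'' on the monomials $\overline{x}_1^m\overline{x}_2^n\overline{y}_1^p\overline{y}_2^q$ over $\operatorname{gr}_{\mathcal F}(R^e)$: nothing in your grading excludes a degree-one relation mixing the $\overline{x}_i$ with the image of $\Omega_{R/{\bf k}}$, or higher-degree analogues. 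The paper avoids exactly this by using the much finer semigroup filtration $G=G_1\times G_2\times G_3$ with the reverse-lexicographic order in which $y$-degrees dominate $x$-degrees, which in turn dominate the individual $\text{d}z_j$-degrees; then the $G_2\times G_3$-component of the degree of a standard monomial records $(m,n,p,q)$ exactly, so monomials with different exponents in the new variables can never interfere, whatever the internal structure of $\operatorname{gr}_{\mathcal F}(R^e)$ may be (Lemma~\ref{GRAD}).

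The patch you propose --- matching the size of $\operatorname{gr}_{\mathcal F}(A^e)$ against a lower bound obtained from $\operatorname{gr}_{\mathcal F}(R^e)\cong\operatorname{Sym}_R\Omega_{R/{\bf k}}$ together with the decomposition of $\Omega_{A/{\bf k}}$ --- does not close this gap in the stated generality. The identification $\operatorname{gr}(R^e)\cong\operatorname{Sym}_R\Omega_{R/{\bf k}}$ is the Poincar\'e--Birkhoff--Witt theorem for the Lie--Rinehart pair $(R,\Omega_{R/{\bf k}})$, which is known only under projectivity (or flatness) hypotheses on $\Omega_{R/{\bf k}}$ as an $R$-module; for an arbitrary Poisson algebra $R$, as in the theorem, $\Omega_{R/{\bf k}}$ need not be projective, and the canonical map $\operatorname{Sym}_R\Omega_{R/{\bf k}}\to\operatorname{gr}(R^e)$ is only a surjection. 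A surjection from the symmetric algebra gives an upper bound on $\operatorname{gr}_{\mathcal F}(A^e)$, the same side of the sandwich you already have from the spanning argument; the lower bound is precisely what remains unproved. Note that the paper deliberately never identifies $\operatorname{gr}_{\mathcal F}(R^e)$ with a symmetric algebra --- it only needs the four new variables to be polynomial over it, which is what the refined multigrading delivers. To keep your coarser filtration you would need an independent PBW-type linear-independence statement for $A^e$ itself (in the spirit of \cite{OhPaSh2}), which is a different input from the one you invoke.
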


\begin{proof}
In the  K$\ddot{a}$hler differential $\Omega_{A/{\bf k}}$, set
$$y_1:=\text{d}x_1, \ \ y_2:=\text{d}x_2.$$
By Proposition~\ref{PENV}, $A^e$ is a ${\bf k}$-algebra generated by
$$R, x_1, x_2, \text{d}a, y_1, y_2, \ \ (a\in R)$$
with the following relations: for any $a,b\in R$ and $k=1,2$,
\begin{equation}\label{RELA1}
\begin{aligned}
& [\text{d}a,b]=\{a,b\},&&\\
&[\text{d}a,\text{d}b]=\text{d}\{a,b\},&&\\
\end{aligned}\end{equation}
\begin{equation}\label{RELA2}
\begin{aligned}
&[x_k, a]=0,&& \\
&[x_k, \text{d}a]=\{x_k,a\}=\alpha_{k1}(a)x_1+\alpha_{k2}(a)x_2+\nu_k(a),&& \\
&[x_2,x_1]=0,&&\\
\end{aligned}\end{equation}
\begin{equation}\label{RELA3}
\begin{aligned}
\ [y_k,a]&=\{x_k,a\}=\alpha_{k1}(a)x_1+\alpha_{k2}(a)x_2+\nu_k(a),&&\\
[y_k, \text{d}a]&=\text{d}\{x_k,a\}=\text{d}(\alpha_{k1}(a)x_1+\alpha_{k2}(a)x_2+\nu_k(a)),&& \\
&=(\alpha_{k1}(a)y_1+\alpha_{k2}(a)y_2) +(x_1\text{d}\alpha_{k1}(a)+x_2\text{d}\alpha_{k2}(a) + \text{d}\nu_k(a)),&&\\
[y_k,x_k]&=0,&&\\
[y_1,x_2]&=\{x_1,x_2\}=-(q_{11}x_1^2+q_{12}x_1x_2+w_1x_1+w_2x_2+w_0),&&\\
[y_2,x_1]&=\{x_2,x_1\}=q_{11}x_1^2+q_{12}x_1x_2+w_1x_1+w_2x_2+w_0,&&\\
[y_2,y_1]&=\text{d}\{x_2,x_1\}=\text{d}(q_{11}x_1^2+q_{12}x_1x_2+w_1x_1+w_2x_2+w_0)&&\\
&=(2q_{11}x_1+q_{12}x_2+w_1)y_1+(q_{12}x_1+w_2)y_2&&\\
&\qquad\qquad\qquad\qquad\qquad +(x_1\text{d}w_1+x_2\text{d}w_2+\text{d}w_0).&&
\end{aligned}\end{equation}

Note that   $R^e$  is the subalgebra of $A^e$ generated by $R$ and $\text{d}a$ for all $a\in R$ by Proposition~\ref{PENV}(4). Let
 $B$ be the subalgebra of $A^e$ generated by $R^e$ and $x_1,x_2$.

Let $Z$ be a generating set of $R$ as an algebra. The K$\ddot{a}$hler differential $\Omega_{R/{\bf k}}$ is a left $R$-module generated by  $\text{d}R:=\{\text{d} r | r\in R\}$ and every element $\text{d}r\in\text{d}R$ is an $R$-linear combination of $\{\text{d}z | z\in Z\}$.
Hence $\Omega_{R/{\bf k}}$ is generated by $\{\text{d}z | z\in Z\}$ as a left $R$-module.
 Let $X$ be a maximal ${\bf k}$-linearly independent subset of $\{\text{d}z | z\in Z\}$. Note that $X$ is a generating set of $\Omega_{R/{\bf k}}$ as a left $R$-module. Set
$$X=\{\text{d} z_j | j\in J\}$$
and give a well-order relation $\leq$ on $J$. Let $G_1$ be the semigroup $\bigoplus_{j\in J} S_j$, where each $S_j$ is the semigroup $\Bbb Z_+:=\{0,1,2,\ldots\}$ with the usual addition and let $G_2,G_3$ be the semigroup $\Bbb Z_+\times \Bbb Z_+$.
Give an order relation $\leq$ in $G_1$ as follows: Let $e_j$ be the canonical element of $G_1$ such that the $j$-th component is 1 and the others are 0. For $\sum_{j\in J} p_je_j,\sum_{j\in J}q_je_j\in G_1$,
$$\begin{aligned}
\sum_{j\in J} p_je_j&< \sum_{j\in J}q_je_j
\Leftrightarrow\\
&\left\{\begin{aligned}&\sum_jp_j<\sum_jq_j\text{ or }\\
&\sum_jp_j=\sum_jq_j\text{ and }\exists\, j_0\in J\ \text{such that}\ p_{j_0}<q_{j_0},  p_j=q_j\, \forall j>j_0.\end{aligned}\right.
\end{aligned}$$
Also, give  order relations $\leq$ in $G_2$ and $G_3$ as follows:  For $(m,n),(p,q)\in G_2, G_3$,
$$(m,n)< (p,q)\Leftrightarrow
\left\{\begin{aligned}&m+n<p+q\text{ or }\\
&m+n=p+q\text{ and } n< q\text{ or }\\
&m+n=p+q,  n= q\text{ and }m<p.\end{aligned}\right.$$
Set $$G=G_1\times G_2\times G_3$$
and give an order relation $\preccurlyeq$ on $G$ as follows: For any $(a_1,a_2, a_3), (b_1, b_2, b_3)\in G$,
$$ (a_1,a_2, a_3)\preccurlyeq (b_1, b_2, b_3) \Leftrightarrow  \left\{\begin{aligned} & a_3< b_3 \text{ or }\\
                                                      & a_3= b_3 \text{ and  } a_2< b_2 \text{ or } \\
                                                       & a_3= b_3 , a_2=b_2 \text{ and  } a_1\leq b_1.
                                                       \end{aligned}\right.$$
We will identify $G_1, G_2, G_3$ with the corresponding canonical sub-semigroups of $G$. Note that  the order relation $\preccurlyeq$ on $G$ is the reversed lexicographic order and
$$g_1\prec g_2\prec g_3$$
for any nonzero elements $g_1\in G_1, g_2\in G_2, g_3\in G_3$.
          We will call finite products of $a\in R, \text{d}z_j, x_1, x_2,y_1,y_2$  {\it monomials}, where $j\in J$ and repetitions allowed. A monomial ${\bf x}$ is said to be a {\it standard monomial} if ${\bf x}$ is of the form
$${\bf x}=a(\text{d}z_{j_1})(\text{d}z_{j_2})\ldots(\text{d}z_{j_k})x_1^mx_2^ny_1^py_2^q,$$
where $a\in R$, $j_i\in J$, $j_1\leq j_2\leq\ldots\leq j_k$ and $m,n,p,q\in\Bbb Z_+$.
Note that every element of $A^e$ is a  ${\bf k}$-linear combination of  monomials. Give  degrees on the generators of $A^e$ by
\begin{equation}\label{DEG}
\begin{array}{ll}\deg a=0, &(a\in R), \\
\deg \text{d}z_j= e_j\in G_1, &(j\in J),\\
\deg x_1= (1,0)\in G_2, &\deg x_2= (0,1)\in G_2,\\
 \deg y_1= (1,0)\in G_3, &\deg y_2= (0,1)\in G_3.
 \end{array}\end{equation}
 Then every monomial of $A^e$ has a degree induced by (\ref{DEG}). For instance,
the monomial $y_2 a^2 x_1 (\text{d} z_j)^3$ has the degree
$$\deg y_2+\deg a^2+\deg x_1+\deg (\text{d} z_j)^3= (3e_j,(1,0),(0,1))\in G,$$
where $a\in R$.

For  $g\in G$, let $\mathcal{F}_g$ be the ${\bf k}$-linear combinations of monomials with degree less than or equal to $g$. Then, for all $f,g \in G$,
$$\mathcal{F}_f\mathcal{F}_g\subseteq \mathcal{F}_{f+g},\ \ \mathcal{F}_f\subseteq\mathcal{F}_g \ \text{if $f\preccurlyeq g$}, \ \ \
\bigcup_{g\in G}\mathcal{F}_g = A^e. $$
Hence, $\mathcal{F}(A^e):=\{\mathcal{F}_g\  |\ g\in G\}$ is a filtration of $A^e$. Observe that $\mathcal{F}_0=R$, where $0$ is the identity element of $G$, and that
$$\mathcal{F}(R^e):=\{\mathcal{F}_g\cap R^e\  |\ g\in G\},\ \ \ \mathcal{F}(B):=\{\mathcal{F}_g\cap B \ |\ g\in G\}$$
are also filtrations of $R^e$ and $B$, respectively. Let $\text{gr}_{\mathcal F}(A^e)$ be the associated graded algebra determined by $\mathcal{F}(A^e)$. That is,
$$\text{gr}_{\mathcal F}(A^e)=\bigoplus_{g\in G} (\mathcal{F}_g/\mathcal{F}_{g^-}),$$
where $\mathcal{F}_{g^-}$ is the ${\bf k}$-linear combinations of monomials with degree strictly less than $g$ ($\mathcal{F}_{0^-}=\{0\}$).  Refer to \cite[\S1.6]{McRo} for details of the associated graded algebra.
The associated graded algebras $\text{gr}_{\mathcal F}(R^e)$ and $\text{gr}_{\mathcal F}(B)$ are also constructed by the filtrations $\mathcal{F}(R^e)$ and $\mathcal{F}(B)$, respectively.

\begin{lem}\label{GRAD}
(1) $\text{gr}_{\mathcal F}(R^e)$ and $\text{gr}_{\mathcal F}(B)$
are subalgebras of $\text{gr}_{\mathcal F}(B)$ and $\text{gr}_{\mathcal F}(A^e)$, respectively.

(2) $\text{gr}_{\mathcal F}(R^e)$ is a commutative algebra.

(3) $\text{gr}_{\mathcal F}(B)$ is a polynomial algebra over $\text{gr}_{\mathcal F}(R^e)$ with two variables
 $$\text{gr}_{\mathcal F}(B)=\text{gr}_{\mathcal F}(R^e)[\overline{x}_1,\overline{x}_2],$$
 where $\overline{x}_1, \overline{x}_2$ are the canonical images of $x_1,x_2$ in $\text{gr}_{\mathcal F}(B)$, respectively.

(4) $\text{gr}_{\mathcal F}(A^e)$ is a polynomial  algebra over $\text{gr}_{\mathcal F}(B)$ with two variables
$$\text{gr}_{\mathcal F}(A^e)=\text{gr}_{\mathcal F}(B)[\overline{y}_1,\overline{y}_2]
=\text{gr}_{\mathcal F}(R^e)[\overline{x}_1,\overline{x}_2][\overline{y}_1,\overline{y}_2],$$
where $\overline{y}_1, \overline{y}_2$ are the canonical images of $y_1,y_2$ in $\text{gr}_{\mathcal F}(A^e)$, respectively.

(5) Every element of $A^e$ (respectively, $B$, $R^e$) is a ${\bf k}$-linear combination of standard monomials.

(6) For any nonzero element $z\in A^e$, there exists $g\in G$ such that
$$0\neq z+\mathcal{F}_{g^-}\in \mathcal{F}_{g}/\mathcal{F}_{g^-} \subseteq\text{gr}_{\mathcal F}(A^e).$$
\end{lem}

\begin{proof}
(1) It is obvious since $(\mathcal{F}_g\cap R^e)\subseteq (\mathcal{F}_g\cap B)\subseteq \mathcal{F}_g$ for each $g\in G$.

(2)
In the commutation relations (\ref{RELA1}),   the degrees of monomials appearing in the left hand sides  are greater than those of monomials appearing in the right hand sides. Hence $\text{gr}_{\mathcal F}(R^e)$ is commutative.

(3)   The result follows immediately from (\ref{RELA2}).

(4) The result follows immediately from  (\ref{RELA3}).

(5) It is obvious by (\ref{RELA1}),  (\ref{RELA2}) and (\ref{RELA3}).

(6) Let $g=\min\{f\in G\ |\ z\in \mathcal{F}_f\}$. Then $0\neq z+\mathcal{F}_{g^-}\in \mathcal{F}_{g}/\mathcal{F}_{g^-} \subseteq\text{gr}_{\mathcal F}(A^e).$
\end{proof}

By Lemma~\ref{GRAD}(3),  $B$ is generated by $$\frak B=\{x_1^{\ell_1}x_2^{\ell_2}\, |\, \ell_1,\ell_2\geq0\}$$ as a left $R^e$-module. Suppose that
$$\sum_{k,\ell} z_{k,\ell}x_1^{k}x_2^{\ell}=0,$$
where $z_{k,\ell}\in R^e$ for all $k,\ell$. Since  $\text{gr}_{\mathcal F}(B)$ is the polynomial ring $\text{gr}_{\mathcal F}(R^e)[\overline{x}_1,\overline{x}_2]$, the corresponding elements
of $z_{k,\ell}$ in $\text{gr}(\mathcal{F}(R^e))$ are zero for all $k,\ell$. Hence $z_{k,\ell}=0$ for all $k,\ell$ by Lemma~\ref{GRAD}(6) and thus $B$ is a free left $R^e$-module with basis $\frak B$.
It follows, by (\ref{RELA2}), that
$B$ is a left double  extension
$$R^e[x_1,x_2;\sigma^1,\delta^1]$$
with the DE-data $\{P^1,\sigma^1,\delta^1,\tau^1\}$ given by
$$\begin{aligned}
P^1&=(0,1), &\tau^1&=(0,0,0),\\
\sigma^1(a)&=\begin{pmatrix}a&0\\ 0&a  \end{pmatrix},&
\sigma^1(\text{d}a)&=\begin{pmatrix}\alpha_{11}(a)+\text{d}a&\alpha_{12}(a)\\ \alpha_{21}(a)&\alpha_{22}(a)+\text{d}a  \end{pmatrix},\\
\delta^1(a)&=\begin{pmatrix}0\\ 0  \end{pmatrix},&
\delta^1(\text{d}a)&=\begin{pmatrix}\nu_{1}(a)\\ \nu_{2}(a) \end{pmatrix}
\end{aligned}$$
for $a\in R$.  Moreover, $B$ is a free right $R^e$-module with basis $\frak B$ by (\ref{RELA2}) and thus $B$ is a double extension of $R^e$ since $x_1x_2=x_2x_1$.

We have already known that $A^e$ is generated by
$$\frak C=\{y_1^{\ell_1}y_2^{\ell_2}\, |\, \ell_1,\ell_2\geq0\}$$ as a left $B$-module.
Since $\text{gr}_{\mathcal F}(A^e))$ is the  polynomial ring $\text{gr}_{\mathcal F}(B)[\overline{y}_1,\overline{y}_2]$, $A^e$ is a free left $B$-module with basis $\frak C$ by Lemma~\ref{GRAD}(4).
 Hence,  by the commutation relations (\ref{RELA3}),  $A^e$ is a left double  extension $B[y_1,y_2;\sigma^2,\delta^2]$ with the DE-data $\{P^2,\sigma^2,\delta^2,\tau^2\}$, where
$$\begin{aligned}
P^2&=(0,1), \\
\tau^2&=(2q_{11}x_1+q_{12}x_2+w_1, \ q_{12}x_1+w_2, \ x_1\text{d}w_1+x_2\text{d}w_2+\text{d}w_0),\\
\sigma^2(a)&=\begin{pmatrix}a&0\\ 0&a  \end{pmatrix},\ \ \ \ \ \
\sigma^2(\text{d}a)=\begin{pmatrix}\alpha_{11}(a)+\text{d}a&\alpha_{12}(a)\\ \alpha_{21}(a)&\alpha_{22}(a)+\text{d}a  \end{pmatrix},\\
\sigma^2(x_1)&=\begin{pmatrix}x_1&0\\ 0&x_1  \end{pmatrix}, \ \ \
\sigma^2(x_2)=\begin{pmatrix}x_2&0\\ 0&x_2  \end{pmatrix}, \\
\delta^2(a)&=\begin{pmatrix}\alpha_{11}(a)x_1+\alpha_{12}(a)x_2+\nu_1(a)\\ \alpha_{21}(a)x_1+\alpha_{22}(a)x_2+\nu_2(a)  \end{pmatrix},&&\\
\delta^2(\text{d}a)&=\begin{pmatrix}x_1\text{d}\alpha_{11}(a)+x_2\text{d}\alpha_{12}(a) + \text{d}\nu_1(a)\\
x_1\text{d}\alpha_{21}(a)+x_2\text{d}\alpha_{22}(a) + \text{d}\nu_2(a) \end{pmatrix}, \\
\delta^2(x_1)&=\begin{pmatrix}0\\ q_{11}x_1^2+q_{12}x_1x_2+w_1x_1+w_2x_2+w_0 \end{pmatrix},\\
\delta^2(x_2)&=\begin{pmatrix}-(q_{11}x_1^2+q_{12}x_1x_2+w_1x_1+w_2x_2+w_0)\\ 0 \end{pmatrix}.
\end{aligned}$$
Moreover, $A^e$ is  a free right $B$-module with basis $\{y_2^iy_1^j\}_{i,j\geq0}$  by (\ref{RELA3}). Hence $A^e$ is also a right double  extension of $B$ and thus it is a double  extension of $B$. It completes the proof of Theorem~\ref{ENVDPE}.
\end{proof}

\begin{notn}\label{RemarkGR} Let $R$ and $G_1$ be the ones in the proof of  Theorem~\ref{ENVDPE}.

(1) The filtration of $R^e$ is indexed by the semigroup $G_1$, namely,
$$\mathcal{F}(R^e)=\{\mathcal{F}_{g}(R^e)\,|\,g\in G_1\},$$
where $\mathcal{F}_g(R^e)=\mathcal{F}_{g}\cap R^e$, and its associated graded algebra $\text{gr}_{\mathcal F}(R^e)$ is a commutative algebra generated by
$R $ and $\overline{\text{d}z}_j$ for all $j\in J$. Hence, if $R$ is finitely generated then $\text{gr}_{\mathcal F}(R^e)$
is a finitely generated commutative algebra over $R$. It follows that the Poisson enveloping algebra of any Poisson algebra that is finitely generated as an algebra is noetherian. (See \cite[Proposition 9]{Oh5}.)

(2)
Let $A$ be a Poisson-Ore extension $A=R[x;\alpha,\nu]_p$ given in \cite{Oh8}, namely, $A=R[x]$ is a Poisson algebra with a Poisson bracket
$$\{x,a\}=\alpha(a)x+\nu(a)$$
for $a\in R$.
 Set $$y=\text{d}x\in A^e,\ \ \ G_2=G_3=\Bbb Z_+,\ \ \ G=G_1\times G_2\times G_3$$
 and give  a well-order relation on $G$ by modifying that of $G$  in the proof of Theorem~\ref{ENVDPE}. If we give degrees on the generators of $A^e$ by
$$
\begin{array}{ll}\deg a=0, &(a\in R), \\
\deg \text{d}z_j= e_j\in G_1, &(j\in J),\\
\deg x= 1\in G_2, & \\
\deg y= 1\in G_3,&
 \end{array}
 $$
 $A^e$ is a filtered algebra with a filtration $\{\mathcal{F}_g| g\in G\}$ induced by the above degrees and
  its associated graded algebra $\text{gr}_{\mathcal F}(A^e)$ is a polynomial ring with two variables
$$\text{gr}_{\mathcal F}(A^e)=\text{gr}_{\mathcal F}(R^e)[\overline{x}][\overline{y}].$$
Hence, the subalgebra $B$ of $A^e$ generated by $R^e$ and $x$ is a free left and right $R^e$-module with basis $\{x^i\ |\ i\geq0\}$
and $A^e$ is a free left and right $B$-module with basis $\{y^i\ |\ i\geq0\}$.
It follows that  $A^e$ is an iterated skew polynomial algebra
$$A^e=R^e[x;\sigma_1,\delta_1][y;\sigma_2,\delta_2],$$
 where $\sigma_k,\delta_k$ $(k=1,2)$ are given by
 \begin{equation}\label{ORE}
 \begin{aligned}
 \sigma_1(a)&=a,&\delta_1(a)&=0, \\
 \sigma_1(\text{d}a)&=\text{d}a+\alpha(a),&\delta_1(\text{d}a)&=\nu(a), \\
 \sigma_2(a)&=a+\alpha(a),&\delta_2(a)&=\nu(a), \\ \sigma_2(\text{d}a)&=\text{d}a+\alpha(a),&\delta_2(\text{d}a)&=x\text{d}a+\text{d}\nu(a),\\
 \sigma_2(x)&=x,&\delta_2(x)&=0
 \end{aligned}
 \end{equation}
 for all $a\in R$. It is easy to observe that $A^e$ is a double extension of $R^e$ with the DE-data determined by (\ref{ORE}).
 (See \cite[Theorem 0.1 and Proposition 2.2]{LuWaZh2}.)
\end{notn}

Rather than Ore extension, very few properties are known to be preserved under double Ore extension. See references \cite{CLM, ZhZh, Zhu}. Hence we do not have an analogy of \cite[Corollary 0.2]{LuWaZh2} saying that the Poisson enveloping algebras of double Poisson-Ore extensions preserve nice properties from the original Poisson enveloping algebras. In the following, we will focus on three special situations where we know the analogy holds.

\begin{cor}\label{IteratedORE}
Let $R$ be a Poisson algebra and let $A=R[x_1,x_2;\alpha,\nu]_p$ be a double Poisson-Ore extension of $R$ with DE-data
$$\left\{Q=(q_{11},q_{12}),\ \alpha=\begin{pmatrix}\alpha_{11}&\alpha_{12}\\ \alpha_{21}&\alpha_{22}  \end{pmatrix},\ \nu=\begin{pmatrix}\nu_{1}\\ \nu_{2} \end{pmatrix},\ w=(w_1,w_2,w_0)\right\}.$$
Suppose $\alpha_{12}=0$ or $\alpha_{21}=0$. Then $A$ is an iterated Poisson-Ore extension of $R$. As a consequence, $A^e$ is an iterated Ore extension of $R^e$, and $A^e$ inherits the following properties from $R^e$:
\begin{itemize}
\item[(1)] being a domain;
\item[(2)] being noetherian;
\item[(3)] having finite global dimension;
\item[(4)] having finite Krull dimension;
\item[(5)] being twisted Calabi-Yau;
\item[(6)] being Koszul provided that $R^e$ and $A^e$ are graded quadratic.
\end{itemize}
\end{cor}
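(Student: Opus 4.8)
The plan is first to establish the geometric claim that $A$ is an iterated Poisson-Ore extension, and then to transport this through the enveloping-algebra functor. I assume $\alpha_{12}=0$ (the case $\alpha_{21}=0$ is symmetric, with the roles of $x_1$ and $x_2$ interchanged). When $\alpha_{12}=0$ the formula $\{x_1,a\}=\alpha_{11}(a)x_1+\nu_1(a)$ no longer involves $x_2$, so $S:=R[x_1]$ is a Poisson subalgebra of $A$. Moreover conditions (a)--(d) restrict to exactly the hypotheses needed: the $(1,1)$-entry of (c) reads $\alpha_{11}(\{a,b\})=\{\alpha_{11}(a),b\}+\{a,\alpha_{11}(b)\}$, since the matrix commutator $[\alpha(a),\alpha(b)]$ vanishes in its $(1,1)$-slot, so $\alpha_{11}$ is a Poisson derivation of $R$; and the first entry of (d) becomes the $\nu_1$-compatibility, so $\nu_1$ is a Poisson $\alpha_{11}$-derivation. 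Hence $S=R[x_1;\alpha_{11},\nu_1]_p$ is a genuine Poisson-Ore extension of $R$.

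Next I would exhibit $A$ as a Poisson-Ore extension of $S$. Writing the bracket of $x_2$ against the generators $R\cup\{x_1\}$ of $S=R[x_1]$ in Ore form, one reads off
\begin{equation*}
\{x_2,a\}=\alpha_{22}(a)x_2+(\alpha_{21}(a)x_1+\nu_2(a)),\qquad
\{x_2,x_1\}=(q_{12}x_1+w_2)x_2+(q_{11}x_1^2+w_1x_1+w_0),
\end{equation*}
so the candidate data are $\beta(a)=\alpha_{22}(a),\ \beta(x_1)=q_{12}x_1+w_2$ and $\mu(a)=\alpha_{21}(a)x_1+\nu_2(a),\ \mu(x_1)=q_{11}x_1^2+w_1x_1+w_0$, all lying in $S$. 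Since $S=R[x_1]$ is a polynomial ring over $R$, these prescriptions extend uniquely to derivations $\beta,\mu$ of $S$, and $\{x_2,-\}$ agrees with $s\mapsto\beta(s)x_2+\mu(s)$ on the generators, hence everywhere by the Leibniz rule; thus $A=S[x_2;\beta,\mu]_p$. The compatibility conditions making $(\beta,\mu)$ a Poisson-Ore datum are then automatic, being equivalent to the already-known fact that $A$ is a Poisson algebra. This is exactly where the hypothesis is used: the quadratic term $q_{11}x_1^2$ can be accommodated only because $x_1$ is the \emph{inner} variable, so it is absorbed into $\mu(x_1)\in S$.

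With $A=R[x_1;\alpha_{11},\nu_1]_p[x_2;\beta,\mu]_p$ in hand, I would invoke the one-variable result recalled in Remark~\ref{RemarkGR}(2) (from \cite{LuWaZh2}): the Poisson enveloping algebra of a Poisson-Ore extension $T[z;\alpha,\nu]_p$ is an iterated Ore extension $T^e[z;\sigma_1,\delta_1][\mathrm{d}z;\sigma_2,\delta_2]$ of $T^e$. Applying this with $T=S,\ z=x_2$ realizes $A^e$ as an iterated Ore extension over $S^e$, and with $T=R,\ z=x_1$ realizes $S^e$ as an iterated Ore extension over $R^e$; Proposition~\ref{PENV}(4) guarantees that the $S^e$ produced is exactly the canonical copy of the enveloping algebra of $S$ sitting inside $A^e$, so the two applications compose and $A^e$ is a fourfold iterated Ore extension of $R^e$. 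Equivalently one can read this off Theorem~\ref{ENVDPE} directly: when $\alpha_{12}=0$ both $\sigma^1$ and $\sigma^2$ are lower triangular, $\sigma^i_{12}=0$, and since $P^1=P^2=(0,1)$ there is no quadratic relation in $A^e$ at all---the potential $x_1^2$ term has been differentiated to the linear expression $2q_{11}x_1\,\mathrm{d}x_1$---so each double extension splits into two Ore extensions.

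Finally, properties (1)--(6) are inherited because each is stable under a single Ore extension $T\mapsto T[z;\sigma,\delta]$ with $\sigma$ an automorphism: remaining a domain and remaining noetherian are classical (\cite{McRo}), global and Krull dimension each rise by at most one, and being twisted Calabi-Yau as well as being Koszul (under the stated graded-quadratic hypothesis) are preserved by the results used in \cite{LuWaZh2}; iterating four times passes all of them from $R^e$ to $A^e$. I expect the main obstacle to be the decomposition step of the first two paragraphs, and in particular pinning down precisely where $\alpha_{12}=0$ is required: the quadratic term $q_{11}x_1^2$ in $\{x_2,x_1\}$ fits an Ore form only when $x_1$ is inner, which forces the iteration order $R\subset R[x_1]\subset A$. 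For $\alpha_{21}=0$ the order must be reversed with $x_2$ inner, and one must check the analogous absorption; this is transparent at the level of $A^e$, where differentiation lowers degrees and $P^1=P^2=(0,1)$ make the enveloping algebra an honest iterated Ore extension in either case.
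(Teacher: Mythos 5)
Your proposal is correct and follows essentially the same route as the paper: you produce the identical decomposition $A=R[x_1;\alpha_{11},\nu_1]_p[x_2;\beta,\mu]_p$ with exactly the data the paper writes down ($\beta(x_1)=q_{12}x_1+w_2$, $\mu(a)=\nu_2(a)+\alpha_{21}(a)x_1$, $\mu(x_1)=q_{11}x_1^2+w_1x_1+w_0$), and then invoke \cite[Theorem 0.1 and Corollary 0.2]{LuWaZh2} for the passage to $A^e$ and the inheritance of properties, just as the paper does. The only difference is that you spell out why the restricted data satisfy the Poisson-Ore compatibility conditions and explicitly flag the asymmetry in the $\alpha_{21}=0$ case (where the term $q_{11}x_1^2$ must still be absorbable), a point the paper dismisses as ``analogous''; this is a harmless elaboration rather than a different method.
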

\begin{proof}
Let us assume that $\alpha_{12}=0$. The argument for $\alpha_{21}=0$ is analogous. It is straightforward for one to check that
\[A=R[x_1; \alpha_{11}, \nu_1]_p[x_2;\alpha_{22}', \nu_2']_p\]
is an iterated Poisson-Ore extension of $R$, where $\alpha_{22}'(a)=\alpha_{22}(a)$, $\alpha_{22}'(y_1)=q_{12}y_1+w_2$ and $\nu_2'(a)=\nu_2(a)+\alpha_{21}(a)y_1$, $\nu_2'(y_1)=q_{11}y_1^2+w_1y_1+w_0$ for all $a\in R$. Thus the results follow from \cite[Theorem 0.1\&Corollary 0.2]{LuWaZh2}.
\end{proof}

Let us consider the noetherianess of a left double Ore extension in regard to Corollary~\ref{IteratedORE}(2). It is well-known that an  Ore extension $R[y;\sigma,\delta]$  is (left) noetherian if $R$ is a (left) noetherian and $\sigma$ is an automorphism. But if $\sigma$ is not automorphism then $R[y;\sigma,\delta]$  may not be (left) noetherian. (See \cite[Exercise 2P(b) and Theorem 2.6]{GoWa2}.) Likewise,  left double Ore extension
does not preserve noetherianess as seen in the following example.

\begin{example}
Let $R={\bf k}(t)$ be the quotient field of the polynomial ring ${\bf k}[t]$ and let $\sigma$ be the endomorphism on the polynomial ring $R[y_1]$ defined by
$$\sigma(f(t))=f(t^2),\ \ \sigma(y_1)=y_1$$
for all $f(t)\in R$. Note that $\sigma$ is injective.  Let $A$ be an iterated Ore extension
$A=R[y_1][y_2;\sigma]$. Then $A$ is a free left $R$-module with basis $\{y_1^iy_2^j|i,j=0,1,\ldots\}$ and
$$y_1f(t)=f(t)y_1,\ \ y_2f(t)=\sigma(f(t))y_2,\ \ y_1y_2=y_2y_1$$
for all $f(t)\in R$. Hence $A$ is a left double Ore extension of $R$ with a suitable DE-data. Since $A$ is an iterated Ore extension $A=R[y_2;\sigma|_{_{R}}][y_1]$ and $R[y_2;\sigma|_{_{R}}]$ is  left noetherian by  \cite[Exercise 2P(b)]{GoWa2}, $A$ is  left noetherian by \cite[Theorem~2.6]{GoWa2}. But it is easy to check that $A$ is not right noetherian. (See \cite[Exercise 2P(b)]{GoWa2}.)
\end{example}

\begin{cor}\label{ENVDPE2}
Let $R$ be a finitely generated Poisson algebra such that its Poisson enveloping algebra $R^e$ is an Artin-Schelter regular algebra and let
$A=R[x_1,x_2;\alpha,\nu]_p$ be a double Poisson-Ore extension of $R$. If $A^e$ is a connected graded algebra with degree
$\deg x_1=\deg x_2=\deg \text{d}x_1= \deg \text{d}x_2=1$ then $A^e$ is also an Artin-Schelter regular algebra and $\mathrm{gldim}(A^e)=\mathrm{gldim}(R^e )+4$.
\end{cor}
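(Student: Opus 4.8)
The plan is to read off the iterated double extension structure of $A^e$ from Theorem~\ref{ENVDPE} and then apply, twice, the known fact that a connected graded double Ore extension of a connected graded noetherian Artin--Schelter regular algebra is again noetherian Artin--Schelter regular, with global dimension raised by exactly $2$ (see \cite{ZhZh}). By Theorem~\ref{ENVDPE} we may write
$$A^e=R^e[x_1,x_2;\sigma^1,\delta^1][y_1,y_2;\sigma^2,\delta^2],\qquad B:=R^e[x_1,x_2;\sigma^1,\delta^1],$$
so that $A^e$ is a double extension of $B$ and $B$ is a double extension of $R^e$. Since $R$ is finitely generated, $R^e$ is noetherian by Remark~\ref{RemarkGR}(1), and by hypothesis it is Artin--Schelter regular, hence connected graded.

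First I would check that each of the two extension steps is a \emph{connected graded} double Ore extension. Because $A^e$ is connected graded with $\deg x_1=\deg x_2=\deg y_1=\deg y_2=1$ (recall $y_k=\text{d}x_k$) and $R^e$ is a graded subalgebra with $R^e_0={\bf k}$, the intermediate algebra $B$, being generated by the graded subalgebra $R^e$ together with the degree-one elements $x_1,x_2$, is itself connected graded. The standing grading then forces all the structure data of Theorem~\ref{ENVDPE} to be homogeneous: the matrices $\sigma^1,\sigma^2$ have degree-zero entries (so each $\sigma^i$ is a graded algebra map into $2\times 2$ matrices over the base algebra), the maps $\delta^1,\delta^2$ raise degree by one, the parameters $P^1=P^2=(0,1)$ are scalar, and the relation $y_2y_1=y_1y_2+\tau^2$ is homogeneous of degree $2$; similarly the $x$-relation defining $B$ is homogeneous. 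In other words, the degree-compatibility of the entries of $\alpha$, $\nu$, $w$ and their differentials is automatic from the assumption that $A^e$ is connected graded with the prescribed degrees, and nothing is needed beyond recording these homogeneity conditions.

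With graded-compatibility in hand the conclusion is a direct two-fold application of the cited result. Applying it to the connected graded double extension $B=R^e[x_1,x_2;\sigma^1,\delta^1]$ of the noetherian Artin--Schelter regular algebra $R^e$ shows that $B$ is noetherian Artin--Schelter regular with $\mathrm{gldim}(B)=\mathrm{gldim}(R^e)+2$. Applying it once more to the connected graded double extension $A^e=B[y_1,y_2;\sigma^2,\delta^2]$ of $B$ shows that $A^e$ is noetherian Artin--Schelter regular with
$$\mathrm{gldim}(A^e)=\mathrm{gldim}(B)+2=\mathrm{gldim}(R^e)+4,$$
as claimed. Here it is essential that Theorem~\ref{ENVDPE} produced genuine (two-sided) double extensions, since the free-module structure is what guarantees the clean $+2$ behaviour of global dimension at each stage.

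I expect the main obstacle to be the verification in the second paragraph: one must confirm that the hypothesis \textquotedblleft$A^e$ connected graded\textquotedblright\ genuinely renders \emph{both} stages graded double Ore extensions in the precise sense required by \cite{ZhZh}, and in particular that the intermediate algebra $B$ is connected graded and noetherian so that the theorem may legitimately be invoked a second time. Once this graded-compatibility is settled, the preservation of Artin--Schelter regularity and the additivity $\mathrm{gldim}(A^e)=\mathrm{gldim}(R^e)+4$ follow immediately from the single-step $+2$ law.
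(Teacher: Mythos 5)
Your proposal is correct and follows essentially the same route as the paper: the paper's proof simply invokes Theorem~\ref{ENVDPE} together with \cite[Theorem 0.2]{ZhZh}, i.e.\ the two-fold application of the fact that a connected graded double Ore extension of an Artin--Schelter regular algebra is Artin--Schelter regular with global dimension increased by $2$. Your additional verification that both stages are connected graded double extensions (and that the intermediate algebra $B$ is connected graded and noetherian) is exactly the detail the paper leaves implicit.
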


\begin{proof}
It follows immediately by Theorem~\ref{ENVDPE} and \cite[Theorem 0.2]{ZhZh}.
\end{proof}

\begin{cor}\label{GRENVDPE}
Let $R$ be a Poisson algebra that is finitely generated as an algebra and let
$$A=R[x_1,x_2;\alpha,\nu]_p$$ be a double Poisson-Ore extension of $R$. Then the Poisson enveloping algebra $A^e$ inherits the following properties from $\text{gr} _{\mathcal F}(R^e)$:
\begin{itemize}
\item[(1)] being a domain;
\item[(2)] being prime;
\item[(3)] being a maximal order;
\item[(4)] being Auslander-Gorenstein;
\item[(5)] having finite global dimension;
\item[(6)] having finite Krull dimension.
\end{itemize}
\end{cor}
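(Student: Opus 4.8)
The plan is to reduce every item to the single structural fact, established in Lemma~\ref{GRAD}, that the associated graded algebra
$$\text{gr}_{\mathcal F}(A^e)=\text{gr}_{\mathcal F}(R^e)[\overline{x}_1,\overline{x}_2][\overline{y}_1,\overline{y}_2]$$
is a commutative polynomial ring in the four central variables $\overline{x}_1,\overline{x}_2,\overline{y}_1,\overline{y}_2$ over $\text{gr}_{\mathcal F}(R^e)$; commutativity of the whole graded ring follows from Lemma~\ref{GRAD}(2) together with the fact that the $\overline{x}_i$ and $\overline{y}_i$ are central indeterminates by Lemma~\ref{GRAD}(3)--(4). Since $R$ is finitely generated, Remark~\ref{RemarkGR}(1) shows $\text{gr}_{\mathcal F}(R^e)$ is a finitely generated commutative algebra, hence Noetherian, and therefore so is $\text{gr}_{\mathcal F}(A^e)$. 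For each property $P$ in the list the argument then factors as two transfers,
$$\text{gr}_{\mathcal F}(R^e)\text{ has }P \ \Longrightarrow\ \text{gr}_{\mathcal F}(A^e)\text{ has }P \ \Longrightarrow\ A^e\text{ has }P,$$
the first being invariance of $P$ under a commutative polynomial extension in finitely many variables, the second being the lifting of $P$ from the associated graded algebra to the filtered algebra along $\mathcal F(A^e)$.

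For the first transfer each case is standard for commutative Noetherian rings: a polynomial ring over a domain is a domain, and for commutative rings ``prime'' coincides with ``domain'', which settles (1) and (2); a polynomial ring over an integrally closed Noetherian domain (equivalently, a commutative maximal order) is again integrally closed, giving (3); and the formulas $\mathrm{gldim}\,S[t]=\mathrm{gldim}\,S+1$ and $\mathrm{Kdim}\,S[t]=\mathrm{Kdim}\,S+1$ for Noetherian $S$, applied four times, give (5) and (6) with each invariant of $\text{gr}_{\mathcal F}(A^e)$ exceeding that of $\text{gr}_{\mathcal F}(R^e)$ by exactly $4$, while the analogous behaviour of the Auslander--Gorenstein property under polynomial extension gives (4).

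For the second transfer I would invoke filtered-to-graded lifting: the filtration $\mathcal F(A^e)$ is exhaustive and, by Lemma~\ref{GRAD}(6), separated (every nonzero element has a well-defined nonzero leading symbol), and its associated graded ring is Noetherian. A leading-symbol argument lifts ``domain'' and ``prime'' directly and lifts Noetherianity, so $A^e$ itself is Noetherian; the inequalities $\mathrm{gldim}\,A^e\le\mathrm{gldim}\,\text{gr}_{\mathcal F}(A^e)$ and $\mathrm{Kdim}\,A^e\le\mathrm{Kdim}\,\text{gr}_{\mathcal F}(A^e)$ (see \cite{McRo}) yield finiteness in (5) and (6); and the known liftings of the maximal-order and Auslander--Gorenstein properties along a separated exhaustive filtration with Noetherian associated graded ring yield (3) and (4).

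The main obstacle, and the point needing the most care, is that $\mathcal F(A^e)$ is indexed not by $\mathbb Z$ or $\mathbb N$ but by the totally ordered semigroup $G=G_1\times G_2\times G_3$ of the proof of Theorem~\ref{ENVDPE}, whereas the lifting theorems are usually phrased for $\mathbb Z$-filtrations. I would address this by embedding $G$ order-preservingly into the totally ordered abelian group $\Gamma=\big(\bigoplus_{j\in J}\mathbb Z\big)\times\mathbb Z^2\times\mathbb Z^2$ with the reversed lexicographic order used there, so that $\mathcal F(A^e)$ becomes a separated exhaustive $\Gamma$-filtration with Noetherian associated graded ring, a setting in which the lifting theorems remain valid. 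The most delicate of the individual liftings are those for the maximal-order and Auslander--Gorenstein properties, where one must check that the hypotheses of the corresponding $\Gamma$-filtered lifting theorem (Noetherian, prime, and separatedness of the filtration) are exactly those already secured above.
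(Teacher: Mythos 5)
Your proposal is correct and follows essentially the same route as the paper: pass to $\text{gr}_{\mathcal F}(A^e)\cong\text{gr}_{\mathcal F}(R^e)[\overline{x}_1,\overline{x}_2,\overline{y}_1,\overline{y}_2]$, transfer each of (1)--(6) across this commutative polynomial extension by the standard facts you list, and then lift back to $A^e$ using that the filtration is exhaustive with Noetherian commutative associated graded ring (the paper invokes Zariskian filtrations and \cite{HO} for this last step). Your concern about the filtration being indexed by the ordered semigroup $G$ rather than $\mathbb Z$ is a technical point the published proof silently glosses over, so your treatment is, if anything, slightly more scrupulous, though it does not change the argument.
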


\begin{proof}
By Lemma \ref{GRAD}, we know the commutative algebra $\text{gr}_{\mathcal F}(A^e)$ is isomorphic to the polynomial algebra over $\text{gr}_{\mathcal F}(R^e)$ with four variables. Then it is clear that $\text{gr}_{\mathcal F}(A^e)$ inherits properties from $\text{gr}_{\mathcal F}(R^e)$ regarding (1), (5) and (6). Moreover, (4) follows from \cite[Theorem 4.2]{EK}. Note that for commutative algebras, primeness is equivalent to domain and by \cite[Proposition 5.1.3]{McRo}, a noetherian commutative integral domain is a maximal order if and only if it is integrally closed. Hence (2) and (3) follow as well.

Further by Remark \ref{RemarkGR} (1), we know $\text{gr}_{\mathcal F}(R^e)$ is a finitely generated commutative algebra, Hence $\text{gr}_{\mathcal F}(A^e)$ is noetherian, then the filtration $\bigcup_{g\in G} \mathcal F_g=A^e$ is a Zariskian filtration. Thus $A^e$ inherits the similar properties (1)-(6) from $\text{gr}_{\mathcal F}(A^e)$ by the standard results of Zariskian filtration \cite{HO}.
\end{proof}

\noindent
{\bf Acknowledgments}
The second, third and fourth authors are grateful for the hospitality of the first author at Zhejiang Normal University summer 2016 during the time the project was started. The first and fourth authors are  supported by the National Natural Science Foundation of China (No. 11571316, No. 11001245 for the first author and No. 11301126, No. 11571316, No. 11671351 for the fourth author), and the first author is additionally supported by the Natural Science Foundation of Zhejiang Province (No. LY16A010003). The second author is supported by Chungnam National University Grant. The third author is supported by AMS-Simons travel grant.



\bibliographystyle{amsplain}

\providecommand{\bysame}{\leavevmode\hbox to3em{\hrulefill}\thinspace}
\providecommand{\MR}{\relax\ifhmode\unskip\space\fi MR }
\providecommand{\MRhref}[2]{%
  \href{http://www.ams.org/mathscinet-getitem?mr=#1}{#2}
}
\providecommand{\href}[2]{#2}

\end{document}